\newdimen\AAdi%
\newbox\AAbo%
\def\AAk#1#2{\s_etbox\AAbo=\hbox{#2}\AAdi=\wd\AAbo\kern#1\AAdi{}}%
\def\AAr#1#2#3{\s_etbox\AAbo=\hbox{#2}\AAdi=\ht\AAbo\raise#1\AAdi\hbox{#3}}%
\font\tenmsb=msbm10 at 12pt \font\sevenmsb=msbm7 at 8pt
\font\fivemsb=msbm5 at 6pt
\def\Bbb#1{{\tenmsb\fam\msbfam#1}}
\newtheorem{thm}{Theorem}[section]
\newtheorem{lem}{Lemma}[section]
\newtheorem{cor}{Corollary}[section]
\newtheorem{rem}{Remark}[section]
\newtheorem{pro}{Proposition}[section]
\newtheorem{defi}{Definition}[section]
\newcommand{\ba}{\begin{array}}
\newcommand{\ea}{\end{array}}
\newcommand{\Section}[2]{\setcounter{equation}{0}
\allowdisplaybreaks
\section[#1]{#2}}
\def\n{\nabla}
\def\ir#1{\mathbb R^{#1}}
\def\f#1#2{\frac{#1}{#2}}
\def\grs#1#2{\bold G_{#1,#2}}
\def\mc#1{\mathcal{#1}}
\def\nA#1{\big|\nabla|A^{#1}|^2\big|}
\def\pf#1{\frac{\partial}{\partial #1}}
\def\pd#1#2{\frac {\partial #1}{\partial #2}}
\def\ppd#1#2{\frac {\partial^2 #1}{\partial #2^2}}
\def\td{\tilde}
\def\ul{\underline}
\def\a{\alpha}
\def\be{\beta}
\def\p#1{\partial #1}
\def\de{\delta}
\def\De{\Delta}
\def\ep{\varepsilon}
\def\G{\Gamma}
\def\g{\gamma}
\def\la{\lambda}
\def\om{\omega}
\def\Om{\Omega}
\def\th{\theta}
\def\ul{\underline}
\def\w{\wedge}
\def\R{\Bbb{R}}
\def\C{\Bbb{C}}
\def\tr{\mbox{tr}}
\def\lan{\langle}
\def\ran{\rangle}
\def\ra{\rightarrow}
\def\ze{\zeta}
\subjclass{58E20,53A10.}
\begin{document}
\title
[Curvature estimates] {Curvature estimates for minimal submanifolds of higher codimension and small G-rank}

\author
[J. Jost, Y. L. Xin and Ling Yang]{J. Jost, Y. L. Xin and Ling Yang}
\address{Max Planck Institute for Mathematics in the
Sciences, Inselstr. 22, 04103 Leipzig, Germany, and \\
Department of Mathematics, University of Leipzig, 04081 Leipzig, Germany.}
\email{jost@mis.mpg.de}
\address {Institute of Mathematics, Fudan University,
Shanghai 200433, China.} \email{ylxin@fudan.edu.cn}
\address{Institute of Mathematics, Fudan University,
Shanghai 200433, China.} \email{yanglingfd@fudan.edu.cn}
\thanks{The first author is supported by the ERC Advanced Grant
  FP7-267087. The second  author and the third  author are supported partially by NSFC. They are also grateful to the Max Planck
Institute for Mathematics in the Sciences in Leipzig for its
hospitality and  continuous support. }

\begin{abstract}
We obtain new curvature estimates and Bernstein type results for
minimal $n-$submanifolds in $\ir{n+m},\, m\ge 2$ under the condition
that  the rank of its Gauss map is at most 2. In particular, this applies to minimal
surfaces in Euclidean spaces of arbitrary codimension.
\end{abstract}
\maketitle

\Section{Introduction}{Introduction}

The classical Bernstein theorem says that an entire minimal graph in
$\ir{3}$ has to be an affine plane. The mathematics behind this
theorem has proved to be enormously rich. It connects with
differential geometry, partial differential equations, and complex
analysis, and it has been a stimulus for important developments in
all these fields (for some references, see for instance \cite{x}).
In particular, the question emerged and has been intensively
investigated to what extent this result can be generalized in
various directions, that is, under which conditions a minimal
submanifold of some Euclidean space (or a sphere) is necessarily
affine linear (or a sub-sphere).

In particular, Bernstein type theorems for higher dimension and
codimension have been studied. Thus, let $M\to\ir{n+m}$ be an $n$
dimensional submanifold in Euclidean space $\ir{n+m}$. In recent
work (\cite{j-x}, \cite{j-x-y2} and \cite{j-x-y3}), we have
systematically used  geometric properties of  Grassmannian manifolds
and the regularity theory of harmonic maps to obtain new Bernstein
type results for higher dimension $n\ge 3$ and codimension $m\ge 2$.
The key point is that the Gauss map of such a minimal submanifold is
a harmonic map with values in a Grassmann manifold. Thus, our
approach combines methods from differential geometry and partial
differential equations. This leads us to the question whether this
can also be combined with the complex analysis approach. The complex
analysis approach is, of course, naturally restricted to the case
$n=2$, if, for the sake of the discussion, we ignore such issues as
subvarieties of complex spaces.

Thus, the present paper is concerned with the case $n=2$ and $m\ge
2$. Now, the target manifold of the Gauss map is $\grs{2}{m}$, the
complex quadric, and the Gauss map is holomorphic. A powerful
traditional approach to this problem investigates the  value
distribution of the Gauss image within the framework of complex
geometry. This was started by  Chern and Osserman \cite{c-o}. From
their results, an analogue of Moser's Bernstein theorem \cite{m}
that works for $n\ge 2$ and $m=1$ also holds for the case $n=2$ and
$m\ge 2$. More precisely, for an entire minimal graph given by
$f:\ir{2}\to\ir{m}$ if
$\De_f=[\det(\de_{ij}+f^\a_if^\a_j)]^{\f{1}{2}}$ is uniformly
bounded, then $f$ is affine linear, and thus represents an affine
plane in $\ir{2+m}$.

In the present paper, we use curvature estimate techniques to
improve this result. This will also enable us to achieve some
technical generalization which we shall now formulate. For a minimal
$n-$submanifold $M$ in $\ir{n+m}$ we consider the rank of the Gauss
map, which is   called  the $G-rank$ for simplicity. Our
condition then simply is $G-rank\le 2$.  Obviously, this class of submanifolds contains surfaces
in $\ir{2+m}$, as well as cylinders over surfaces in  $\ir{3}$.  In
\cite{d-f}, Dajczer and Florit gave a parametric description of all
Euclidean minimal submanifolds of $G-rank = 2$. In particular, they
showed  that
complete minimal submanifolds with $G-rank=2$ have dimension $n=3$ at most (without Euclidean factor).

Here, then, are our  main results.

\begin{thm}
Let $M$ be an $n$-dimensional complete minimal submanifold in
$\R^{n+m}$ with $G-rank\le 2$ and positive $w$-function (see (\ref{wf}).
If $M$ has polynomial volume growth and the function $v=w^{-1}$  has growth
\begin{equation}
\max_{D_R(p_0)}v=o(R^{\f{2}{3}})
\end{equation}
for a fixed point $p_0$, then $M$ has to be an affine linear subspace.
\end{thm}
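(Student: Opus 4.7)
The plan is to deduce the theorem from the interior curvature estimate derived in the earlier sections of this paper; the admissible growth rate $R^{2/3}$ on $v$ is precisely tailored to the polynomial dependence on $v$ in that estimate.

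More concretely, the curvature estimate for minimal submanifolds of $G\text{-}\mathrm{rank}\leq 2$ with positive $w$-function should give, for any $p\in M$ and any $R>0$ with $D_{R}(p)\subset M$, a pointwise bound of the form
\[
|A|^{2}(p)\;\leq\;\frac{C(n,m)}{R^{2}}\,\Phi\Bigl(\max_{D_{R}(p)}v\Bigr),
\]
where $\Phi$ is an increasing function of polynomial type (essentially cubic, so that $R^{-2}\,\Phi\bigl(o(R^{2/3})\bigr)=o(1)$). The polynomial volume growth hypothesis enters at this stage: it is needed to validate the integral-to-pointwise Moser iteration (or Nash--Sobolev estimate on $M$) that produces the pointwise bound from the underlying Bochner-type differential inequality.

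With the estimate in hand, the Bernstein-type conclusion is obtained by letting the radius tend to infinity. Fix any $p\in M$ and choose $R\geq 2\,d(p,p_{0})$, so that $D_{R/2}(p)\subset D_{R}(p_{0})$ by the triangle inequality. Then
\[
\max_{D_{R/2}(p)}v\;\leq\;\max_{D_{R}(p_{0})}v\;=\;o(R^{2/3}),
\]
and applying the estimate on $D_{R/2}(p)$ gives $|A|^{2}(p)\leq 4\,C(n,m)\,R^{-2}\,\Phi\bigl(o(R^{2/3})\bigr)\to 0$ as $R\to\infty$. Since $p$ was arbitrary, $A\equiv 0$ on $M$, and a complete connected submanifold of $\R^{n+m}$ with vanishing second fundamental form is totally geodesic, hence an affine $n$-plane.

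The real difficulty is not this final deduction, which is transparent once the estimate is available, but the curvature estimate itself: one exploits the $G\text{-}\mathrm{rank}\leq 2$ hypothesis to construct a function of $v$ on the (at most) two-dimensional Gauss image (a plurisubharmonic composition, in the spirit of the complex-quadric geometry used by Chern--Osserman) whose $M$-Laplacian dominates $|A|^{2}$ with the sharp power $v^{3}$, and then runs a Moser iteration controlled by the polynomial volume growth. This is what fixes the exponent $2/3$ in the hypothesis.
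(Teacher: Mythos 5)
Your overall strategy coincides with the paper's: combine the integral estimate coming from $\De(|B|^{2s}v^q)\geq(-3s+q)|B|^{2s+2}v^q$ with a mean value inequality to get a pointwise bound on $|B|^2v^3$, then let $R\to\infty$. But there is a genuine gap in the form of the estimate you assume. The paper's pointwise estimate (Theorem 4.2) is
\begin{equation*}
(|B|^2 v^3)(p_0)\leq C_4\, R^{-2}\Big(\max_{D_R}v\Big)^3\Big(\f{V(R)}{V(R/2)}\Big)^{\f{1}{t}},
\end{equation*}
and the volume-ratio factor cannot be absorbed into a constant $C(n,m)$: polynomial volume growth $V(R)=O(R^l)$ gives \emph{no} uniform bound on $V(R)/V(R/2)$ over all $R$ (only an upper bound on $V$, with no matching lower bound on $V(R/2)$ beyond monotonicity; replacing $V(R/2)$ by $\om_n(R/2)^n$ via the monotonicity formula yields a factor growing like $R^{(l-n)/t}$, which destroys the limit whenever $l>n$ and $\max_{D_R}v\to\infty$ slowly, e.g.\ like $R^{2/3}/\log R$). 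The paper's proof therefore hinges on a step you never supply: the dyadic argument showing that polynomial volume growth forces
\begin{equation*}
\liminf_{k\ra\infty}\f{V(2^{k+1})}{V(2^k)}\leq 2^l,
\end{equation*}
so that along a subsequence $R_i=2^{k_i+1}$ the ratio is at most $2^l$ and the estimate (with $t=3$) gives $(|B|^2v^3)(p_0)\leq C_4\,2^{l/3}R_i^{-2}(\max_{D_{R_i}}v)^3\to 0$. The conclusion is reached only along this subsequence, which suffices, but without it the limit $R\to\infty$ you take is not justified.

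Relatedly, you misplace where the volume growth hypothesis enters. It is not needed to ``validate the integral-to-pointwise'' step: the mean value inequality for the subharmonic function $|B|^{2t}v^{3t}$ on a minimal submanifold of Euclidean space holds with no volume hypothesis at all. Its sole role is to tame the ratio $V(R)/V(R/2)$ along the dyadic subsequence. Indeed, if the clean estimate $|B|^2(p)\leq C(n,m)R^{-2}\Phi(\max_{D_R(p)}v)$ you posit were available, the polynomial volume growth assumption in the theorem would be superfluous --- a sign that such an estimate is not obtainable by these methods. Your triangle-inequality reduction for arbitrary $p$ (choosing $R\geq 2\,d(p,p_0)$ so that $D_{R/2}(p)\subset D_R(p_0)$) is a fine way to transfer the growth of $v$, and matches the paper's handling of general points, but it too must be run together with the dyadic volume argument centered at $p$ (or $p_0$), not with the unqualified estimate.
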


Then, we have
\begin{thm}\label{t3}
Let $M=\{(x,f(x)):x\in \R^n\}$ be an entire minimal graph given by a vector-valued function $f:\R^n\ra \R^m$ with  $G-rank\le 2$. If the slope of $f$
satisfies
\begin{equation}
\De_f=\left[\det\Big(\de_{ij}+\sum_{\a}\f{\p f^\a}{\p x^i}\f{\p f^\a}{\p x^j}\Big)\right]^{\f{1}{2}}=o(R^{\f{2}{3}}),
\end{equation}
where $R^2=|x|^2+|f(x)|^2$,
then $f$ has to be an affine linear function.
\end{thm}

\begin{thm}\label{t4}
Let $f:\R^2\ra \R^m$ $(x^1,x^2)\mapsto (f^1,\cdots,f^m)$ be an entire
solution of the minimal surface system
\begin{equation}\label{mi}
\Big(1+\Big|\f{\p f}{\p x^2}\Big|^2\Big)\f{\p^2 f}{(\p x^1)^2}-2\Big\lan \pd{f}{x^1},\pd{f}{x^2}\Big\ran\f{\p^2 f}{\p x^1\p x^2}+\Big(1+\Big|\pd{f}{x^1}\Big|^2\Big)\f{\p^2 f}{(\p x^2)^2}=0.
\end{equation}
If there exists $\ep>0$,
\begin{equation}
\De_f=\det\Big(\de_{ij}+\sum_\a \pd{f^\a}{x^i}\pd{f^\a}{x^j}\Big)^{\f{1}{2}}=O(R^{1-\ep})
\end{equation}
with $R=|x|$, then $f$ has to be affine linear.
\end{thm}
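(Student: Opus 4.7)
The plan is to combine the curvature estimates developed in the body of the paper with two features special to the case $n=2$: any minimal $2$-submanifold automatically has $G$-rank $\le 2$, and its Gauss map into the complex quadric $\grs{2}{m}$ is holomorphic. Writing $M$ as the graph of $f$, the standard computation gives $w=\Delta_f^{-1}$, so $v:=w^{-1}=\Delta_f$, and the hypothesis becomes $v(x)\le C(1+|x|)^{1-\ep}$.

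First I would translate the hypothesis from the base variable $|x|$ into estimates on $M$ itself. Since for $n=2$ one has $|Df|\le\Delta_f$ (because $\Delta_f^2=\det(I+(Df)^T Df)\ge 1+|Df|^2$), integration of the slope along radial rays yields $|f(x)-f(0)|\le C|x|^{2-\ep}$, and the intrinsic distance from the origin satisfies $d_M(0,(x,f(x)))\le |x|+C|x|^{2-\ep}$. In particular $M$ has polynomial volume growth of order at most $3-\ep$, since $\mathrm{Vol}(M\cap\{|x|\le R\})=\int_{|x|\le R}\Delta_f\,dx\le CR^{3-\ep}$. For a point $q=(x,f(x))$ at intrinsic distance $r(q)$ from the origin, $|x|\le r(q)$ gives only $v(q)\le C\,r(q)^{1-\ep}$, which by itself falls short of the $o(r^{2/3})$ hypothesis of Theorem 1.1 when $\ep<1/3$, so the $n=2$ geometry must be used to bridge the gap.

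The main step is to apply the curvature estimate of the paper in the stronger form available for $n=2$. Because the Gauss map $\g:M\to\grs{2}{m}$ is holomorphic, the Bochner-type inequality driving the curvature estimate acquires an additional nonnegative term coming from the K\"ahler form of the complex quadric; this permits the allowed growth of $v$ to be weakened from $o(r^{2/3})$ to $O(r^{1-\ep})$ for any $\ep>0$, yielding on an intrinsic ball of radius $r$ a pointwise bound of the shape $r^2|A|^2(p)\le C\sup_{B_r(p)}v^2\cdot r^{-2\ep}$. Plugging in $v\le Cr^{1-\ep}$ and letting $r\to\infty$ forces $|A|(p)=0$ for every $p\in M$, and hence $f$ is affine linear.

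The principal obstacle is the last step: quantifying the gain provided by holomorphicity so that the critical exponent $2/3$ of Theorem 1.1 is genuinely replaced by $1-\ep$. Concretely, one has to identify an extra K\"ahler contribution to the Bochner identity for $\log v$ that is manifestly nonnegative and strong enough to absorb the $r^{1-\ep}$ growth in the maximum-principle / integral-iteration argument underlying the proof of Theorem 1.1. Once this improved identity is established, the polynomial volume growth from the second step makes the cutoff integrations effective, and the remainder of the argument parallels the graph version already used in the proof of Theorem \ref{t3}.
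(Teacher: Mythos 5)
Your proposal stalls at exactly the point you yourself flag as ``the principal obstacle'': the mechanism that is supposed to replace the exponent $\f{2}{3}$ of Theorem 1.1 by $1-\ep$ is never constructed, only postulated, so the core of the proof is missing. Worse, the mechanism you postulate cannot exist. You ask for an extra manifestly nonnegative term, coming from holomorphicity of the Gauss map, in the Bochner-type inequality for $\log v$; but for a $2$-dimensional entire minimal graph the paper proves that (\ref{dew}) holds as an \emph{equality}, $\De\log w=-|B|^2$ (this is (\ref{dew1}), via the Gauss curvature and the global isothermal coordinates), equivalently $\De v=|B|^2v+v^{-1}|\n v|^2$ exactly, so there is no slack whatsoever to be gained there. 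More generally, \S 6 of the paper shows that holomorphic curves \emph{saturate} all three inequalities (\ref{b4}), (\ref{dew}) and (\ref{kato}); holomorphicity is the equality case, not a source of improvement. Your preparatory steps (the identification $v=\De_f$, the slope and volume-growth bounds) are fine but end up irrelevant, because the paper's argument never needs intrinsic volume growth.

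The actual source of the gain is different and elementary: Osserman's Lemma \ref{iso} gives \emph{global} isothermal parameters $(u^1,u^2)$ obtained from $(x^1,x^2)$ by a constant linear map, so $(g_{ij})=\la^2 S$ with $S$ a constant positive definite matrix and $v=\det(g_{ij})^{\f{1}{2}}$ comparable to $\la^2$. Hence for a cutoff $\eta=\psi(|x|)$ depending only on the base point one has the pointwise improvement $|\n\eta|\le C\,v^{-\f{1}{2}}|D\eta|$ instead of the crude $|\n\eta|\le|D\eta|$. Feeding this into the $L^p$ estimate (\ref{es1}) (with $*1=v\,dx^1\w dx^2$) turns the right-hand side into $C(R-R_0)^{-2}\big(\int_{\Bbb{D}^2(R)}v^{2q+1-t}\,dx\big)^{\f{1}{t}}$: the gradient contributes $v^{-t}$ and the area element only $v^{+1}$, while the domain of integration is the Euclidean disk of area $\pi R^2$. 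Choosing $q=\f{3t-1}{2}$ makes the exponent $2q+1-t=2t$, i.e.\ the bound is $C(R-R_0)^{-2}\,(\max_{\Bbb{D}^2(R)}v)^2\,(\pi R^2)^{\f{1}{t}}$, which under $v=O(R^{1-\ep})$ is $O(R^{\f{2}{t}-2\ep})$; taking $t=\f{2}{\ep}$ and letting $R\to\infty$ forces $|B|\equiv0$. So the special feature of $n=2$ that does the work is the global conformal structure entering the cutoff-gradient estimate, not any refinement of the Bochner or Simons inequalities; this is the lemma your outline would need to invoke to be repaired.
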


This is an improvement of the Chern-Osserman theorem mentioned above
which required $\De_f$ to be  bounded.

The paper is organized as follows. After \S 2 on basic notation and
formulae we describe the special features of the $G-rank\le 2$ case
in \S 3. Then in \S 4 , using  subharmonic functions obtained from
the geometry of the Grassmann manifolds and a Bochner type formula
for the squared norm of the second fundamental form $|B|^2$ we can
obtain $L^p-$estimates and point-wise estimates for $|B|^2$. Those
estimates lead to Bernstein type results.  \S 5 is devoted to the
graphic situation. In the final section we discuss the sharpness of
our estimates. We find that holomorphic curves reach all the
possible equalities in all the geometric inequalities (\ref{b4}),
(\ref{dew}) and (\ref{kato}).

We thank Marcos Dajczer for informing us about \cite{d-f}.

\Section{Fundamental formulas}{Fundamental formulas}\label{s1}

Let $M$ be an $n$-dimensional submanifold in an $(n+m)$-dimensional Riemannian manifold $\bar{M}$  with second fundamental form
$B.$ Let $\bar{\n}$ denote
the Levi-Civita connection on $\bar{M}$. It naturally induces connections
on the tangent bundle $TM$, the normal bundle $NM$ and various induced bundles over $M.$ For notational simplicity  all of them are
denoted by $\n$. For arbitrary $\nu\in \G(NM)$ the shape operator $A^\nu: TM\ra TM$ satisfies
$\lan B_{X,Y},\nu\ran=\lan A^\nu(X),Y\ran$
for every $X,Y\in \G(TM)$.  It is self adjoint in the tangent spaces
of $M.$ The mean curvature field $H$ is defined to be the trace of the
second fundamental form. $M$ is called a \textit{minimal submanifold}
whenever $H$ vanishes on $M$ everywhere. The second fundamental form,
the curvature tensor of the submanifold, the curvature tensor of the
normal bundle and that of the ambient manifold are connected by the Gauss equations, the Codazzi equations and the Ricci equations (see \cite{x}, \S 1.1).

In this paper  we consider a minimal submanifold $M$ of dimension $n$ in the Euclidean space $\R^{n+m}$ with codimension $m\geq 2$.

Now, there is  an important tool, the  Gauss map. The Gauss map $\g:
M\ra \grs{n}{m}$ is defined by
$$\g(p)=T_p M\in \grs{n}{m}$$
via the parallel translation in $\R^{n+m}$ for every $p\in M$, where $\grs{n}{m}$ is the Grassmann manifold
consisting of the oriented linear $n$-subspaces in $\R^{n+m}$. One can write
$$\g(p)=e_1\w \cdots\w e_n$$
by using Pl\"{u}cker coordinates.  Here and in the sequel, $\{e_i\}$
is a local orthonormal tangent frame field of $M$ and $\{\nu_\a\}$
denotes a local orthonormal normal frame field of $M$;  we use the
summation convention and agree on the following ranges of indices:
$$1\leq i,j,k\leq n;\qquad 1\leq \a,\be,\g\leq m.$$

Let
$$h_{\a,ij}:=\lan B_{e_i e_j},\nu_\a\ran$$
be the coefficients of the second fundamental form $B$ of $M$ in $\R^{n+m}.$ Then,
\begin{equation}\label{Gm1}
\g_* e_i=h_{\a,ij}e_{j\a},
\end{equation}
where $e_{j\a}$ is obtained by replacing $e_j$ by $\nu_\a$ in $e_1\w\cdots\w e_n$.
The energy density of the Gauss map thus is nothing but the squared
norm of the second fundamental form (see \cite{x} \S
3.1),
$$e(\g)=\f{1}{2}\lan \g_*e_i,\g_* e_i\ran=\f{1}{2}\sum_{\a,i,j}h_{\a,ij}^2=\f{1}{2}|B|^2.$$

Given two unit $n$-vectors
$$A=a_1\w \cdots\w a_n,\qquad B=b_1\w \cdots\w b_n$$
in the Grassmann manifold $\grs{n}{m}$, their inner product is defined by
\begin{equation}
\lan A,B\ran=\det\big(\lan a_i,b_j\ran\big)
\end{equation}
Fixing a simple unit $n$-vector $A=\ep_1\w \cdots\w \ep_n$, we define
the
$w$-function on $M$:
\begin{equation}\label{wf}
w(p):=\lan e_1\w\cdots\w e_n,A\ran=\det\big(\lan e_i,\ep_j\ran\ran\big).
\end{equation}
Via the Pl\"{u}cker imbedding, the Grassmann manifold $\grs{n}{m}$ can be viewed as a submanifold in a Euclidean space,
and the $w$-function can be regarded as the composition of the Gauss map and a height function on
$\grs{n}{m}$ (see \cite{x-y1}, \cite{j-x-y2} and \cite{j-x-y3}). In particular, if $M=\big(x,f(x)\big)$ is a graph
in $\R^{n+m}$ given by a vector-valued function $f:\R^n\ra \R^m$, then choosing $A$ to be one representing
$(x_1,\cdots,x_n)$ coordinate $n$-plane implies $w>0$ and moreover
\begin{equation}
v:=w^{-1}
\end{equation}
equals the volume element of $M$ (see \cite{j-x}).

The Codazzi equations yield the following formulas for the $w$-function:

\begin{lem}\cite{fc}\cite{x1}
If $M$ is a submanifold in $\R^{n+m}$, then
\begin{equation}\label{dw}
\n_{e_i} w=h_{\a,ij}\lan e_{j\a},\ep_1\w\cdots\w \ep_n\ran.
\end{equation}
 Moreover if $M$
has parallel mean curvature, i.e. $\n H\equiv 0$, then
\begin{equation}\label{La}
\De w=-|B|^2 w+\sum_i\sum_{\a\neq \be,j\neq k}h_{\a,ij} h_{\be,ik} \lan e_{j\a,k\be},\ep_1\w \cdots\w \ep_n\ran.
\end{equation}
with
\begin{equation}
e_{j\a,k\be}=e_1\w\cdots\w \nu_\a \w\cdots\w \nu_\be\w \cdots\w e_n
\end{equation}
that is obtained by replacing $e_j$ by $\nu_\a$ and $e_k$ by $\nu_\be$ in $e_1\w \cdots\w e_n$, respectively.
\end{lem}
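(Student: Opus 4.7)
The key observation is that the reference $n$-vector $A=\ep_1\w\cdots\w\ep_n$ is parallel in $\R^{n+m}$, so every covariant derivative of $w=\langle e_1\w\cdots\w e_n,A\rangle$ falls on the moving frame. I would fix an arbitrary point $p\in M$ and pick frames that are normal at $p$, that is $\n_{e_i}e_j(p)=0$ and $\n^\perp_{e_i}\nu_\a(p)=0$; this is admissible because both identities are tensorial and hold pointwise. In such frames $\bar\n_{e_i}e_j=B(e_i,e_j)=h_{\a,ij}\nu_\a$ at $p$, while $\bar\n_{e_i}\nu_\a=-h_{\a,ik}e_k$ at $p$ from the shape operator identity $\langle\bar\n_{e_i}\nu_\a,e_k\rangle=-\langle\nu_\a,\bar\n_{e_i}e_k\rangle$.

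For (\ref{dw}), apply the Leibniz rule to $\bar\n_{e_i}(e_1\w\cdots\w e_n)$: only the normal part of each $\bar\n_{e_i}e_j$ survives in the chosen frame, and substituting $\nu_\a$ into the $j$-th slot gives $\sum_j h_{\a,ij}\,e_{j\a}$. Pairing with the constant vector $A$ yields the claimed formula immediately.

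For (\ref{La}), differentiate once more and split into three groups. First, the derivative of the coefficient $h_{\a,ij}$ contributes $\sum_i(e_ih_{\a,ij})e_{j\a}$; in the chosen frame $e_ih_{\a,ij}$ coincides with the full covariant derivative $h_{\a,ij;i}$, and the Codazzi equation combined with $\n^\perp H=0$ gives $\sum_i h_{\a,ij;i}=\sum_i h_{\a,ii;j}=\n_{e_j}\langle H,\nu_\a\rangle=0$, so this contribution vanishes. Second, differentiating a tangent factor $e_k$ with $k\ne j$ inside $e_{j\a}$ inserts $h_{\be,ik}\nu_\be$ in slot $k$, yielding the cross term $h_{\a,ij}h_{\be,ik}\,e_{j\a,k\be}$. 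Third, differentiating the $\nu_\a$ in slot $j$ produces $-h_{\a,ik}e_k$ there; the index $k$ is forced to equal $j$, because otherwise slot $k$ would already contain $e_k$ and the wedge would vanish, so this piece is $-h_{\a,ij}(e_1\w\cdots\w e_n)$, and summation over $i,j,\a$ assembles into $-|B|^2\,(e_1\w\cdots\w e_n)$. Taking the inner product with $A$ and noting that $e_{j\a,k\be}$ vanishes automatically when $\a=\be$ or $j=k$ delivers (\ref{La}).

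The main technical obstacle is simply the bookkeeping of the Leibniz rule on an $n$-fold wedge product, in particular the recognition that the tangential component of $\bar\n_{e_i}\nu_\a$ collapses the $k$-sum to $k=j$ and thereby reassembles into the scalar $|B|^2 w$. Once the normal frame kills the Christoffel-type terms, the remaining work is to combine the Codazzi symmetry with the parallel mean curvature hypothesis, both of which act on the coefficient-derivative piece to make it vanish.
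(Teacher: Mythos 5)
Your proof is correct. Note that the paper itself does not prove this lemma --- it quotes it from Fischer-Colbrie \cite{fc} and Xin \cite{x1} --- and your normal-frame Leibniz computation is precisely the standard derivation used in those sources: equation (\ref{dw}) is the pairing of $\g_*e_i=h_{\a,ij}e_{j\a}$ with the parallel $n$-vector $A$, and the second differentiation splits exactly as you describe, with the coefficient-derivative term killed by Codazzi plus $\n H\equiv 0$, the double-insertion terms giving the $e_{j\a,k\be}$ sum (the $\a=\be$ and $j=k$ terms vanishing automatically), and the tangential derivative of $\nu_\a$ collapsing to $-|B|^2w$. You also correctly identified the one place where the normal frame is genuinely needed rather than cosmetic: when differentiating a tangent factor $e_k$ of $e_{j\a}$, the insertion of the tangential component $\G^j_{ik}e_j$ into slot $k$ would not vanish by antisymmetry alone (slot $j$ holds $\nu_\a$, not $e_j$), and it is the vanishing of the connection coefficients at the chosen point that removes it.
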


To have the curvature estimates we need the Simons' version of the Bochner type formula for the squared norm of the second fundamental form. A straightforward calculation shows (see \cite{Si}, (2.6) in \cite{x1})
\begin{equation}\label{b4}
\De |B|^2=2|\n B|^2+2\lan \n^2 B,B\ran\geq 2|\n B|^2-3|B|^4.
\end{equation}

\Section{Small G-rank cases}{Small G-rank cases}

The rank of the Gauss map for a submanifold $M$ in $\ir{n+m}$ is closely related to rigidity problems.
 The classical Beez-Killing theorem is a local rigidity property for
hypersurfaces in $\ir{n+1}$ when $G-rank\ge 3$. For global
investigations,  we refer to \cite{d-g}. Here, we
study the case of $G-rank\le 2$.

Now, for every $p\in M$, we have
$$\dim \text{Ker}(\g_*)_p=n-\text{rank}(\g_*)_p\geq n-2.$$
Then for any $p_0\in M$, there exists a local smooth distribution
$\mc{K}$ of dimension $n-2$ on $U\ni p_0$, such that
$\mc{K}_p\subset \text{Ker}(\g_*)_p$ for any $p\in U$. $\mc{K}$ is
called the relative nullity distribution by Chern-Kuiper \cite{c-k}.
This is an integrable distribution. Therefore, one can find a local
tangent orthonormal frame field $\{e_i\}$, such that
$\mc{K}_p=\text{span}\{e_i(p):i\geq 3\}$, i.e.
\begin{equation}
\g_*e_i=0\qquad 3\leq i\leq n
\end{equation}
and it  follows from (\ref{Gm1}) that $h_{\a,ij}=0$, i.e.
\begin{equation}
B_{e_i e_j}=0\qquad \text{whenever }i\geq 3\text{ or }j\geq 3.
\end{equation}
Hence
\begin{equation}
0=H=\sum_{i=1}^n B_{e_i e_i}=B_{e_1 e_1}+B_{e_2 e_2}.
\end{equation}
At the considered point, let
\begin{equation}
G(e_1,e_2):=\left(\begin{array}{cc}
\lan B_{e_1 e_1},B_{e_1 e_1}\ran & \lan B_{e_1 e_1},B_{e_1 e_2}\ran\\
\lan B_{e_1 e_2},B_{e_1,e_1}\ran & \lan B_{e_1 e_2},B_{e_1 e_2}\ran
\end{array}\right).
\end{equation}
$G$ then  is a semi-positive definite matrix, whose eigenvalues are
denoted by $\mu_1^2$ and $\mu_2^2$ ($\mu_1\geq \mu_2\geq 0$) . Then there exists
an orthogonal matrix
\begin{equation}\label{o1}
O=\left(\begin{array}{cc}
\cos\th & -\sin\th\\
\sin\th & \cos\th
\end{array}\right)
\end{equation}
such that
\begin{equation}\label{o2}
G=O\left(\begin{array}{cc}
\mu_1^2 & \\
  & \mu_2^2
\end{array}\right)O^T.
\end{equation}
Now we put
\begin{equation}
f_1=\cos\a e_1-\sin\a e_2\qquad f_2=\sin\a e_1+\cos\a e_2
\end{equation}
with $\a$ to be chosen,
then
$$\aligned
B_{f_1f_1}&=\cos^2\a B_{e_1e_1}+\sin^2\a B_{e_2e_2}-2\cos\a\sin\a B_{e_1e_2}\\
&=\cos(2\a) B_{e_1e_1}-\sin(2\a) B_{e_1e_2}\\
B_{f_1f_2}&=\cos\a\sin\a B_{e_1e_1}-\cos\a\sin\a B_{e_2e_2}+(\cos^2\a-\sin^2\a)B_{e_1e_2}\\
&=\sin(2\a) B_{e_1e_1}+\cos(2\a) B_{e_1e_2}.
\endaligned$$
Thus
\begin{equation}\label{o3}
G(f_1,f_2)=\left(\begin{array}{cc}
\cos(2\a) & -\sin(2\a)\\
\sin(2\a) & \cos(2\a)
\end{array}\right)G(e_1,e_2)\left(\begin{array}{cc}
\cos(2\a) & -\sin(2\a)\\
\sin(2\a) & \cos(2\a)
\end{array}\right)^T
\end{equation}
Choosing $\a=-\f{\th}{2}$ and combining with (\ref{o1}), (\ref{o2}) and (\ref{o3}) gives
$$G(f_1,f_2)=\left(\begin{array}{cc}
\mu_1^2 & \\
 & \mu_2^2
\end{array}\right).
$$
Therefore, by carefully choosing local tangent frames and normal frames, one can assume that at the considered point
\begin{equation}\label{sf}
A^1=\left(\begin{array}{cccc}
\mu_1 & 0 & &\\
0 & -\mu_1 & &\\
& & &\\
& & \multicolumn{1}{c}{\raisebox{0.5ex}[0pt]{\Huge o}}&
\end{array}
\right)\qquad
A^2=\left(\begin{array}{cccc}
0 & \mu_2 & &\\
\mu_2 & 0 & &\\
& & &\\
& & \multicolumn{1}{c}{\raisebox{0.5ex}[0pt]{\Huge o}}&
\end{array}\right)
\end{equation}
and $A^\a=0$ for each $\a\geq 3$, where $A^\a:=A^{\nu_\a}$ is the shape operator.

In this case, (\ref{La}) can be rewritten as
\begin{equation}\label{La2}\aligned
\De w&=-|B|^2 w+2\sum_i\sum_{j\neq k}h_{1,ij}h_{2,ik}\lan e_{j1,k2},A\ran\\
&=-|B|^2 w+2h_{1,11}h_{2,12}\lan e_{11,22},A\ran+2h_{1,22}h_{2,21}\lan e_{21,12},A\ran\\
&=-|B|^2 w+4\mu_1\mu_2\lan e_{11,22},A\ran
\endaligned
\end{equation}
where $A=\ep_1\w\cdots\w\ep_n$ and the last step follows from $e_{11,22}=-e_{21,12}=\nu_1\w\nu_2\w e_3\w\cdots\w e_n$.
By (\ref{dw}),
$$\aligned
\n_{e_1}w&=h_{1,11}\lan e_{11},A\ran+h_{2,12}\lan e_{22},A\ran=\mu_1\lan e_{11},A\ran+\mu_2\lan e_{22},A\ran\\
\n_{e_2}w&=h_{1,22}\lan e_{21},A\ran+h_{2,21}\lan e_{12},A\ran=-\mu_1\lan e_{21},A\ran+\mu_2\lan e_{12},A\ran
\endaligned$$
and $\n_{e_i}w=0$ for every $i\geq 3$. Hence
\begin{equation}\label{dw2}
\aligned
|\n w|^2=&\sum_i |\n_{e_i}w|^2=\big(\mu_1\lan e_{11},A\ran+\mu_2\lan e_{22},A\ran\big)^2+\big(-\mu_1\lan e_{21},A\ran+\mu_2\lan e_{12},A\ran\big)^2\\
=&\big(\mu_1\lan e_{11},A\ran-\mu_2\lan e_{22},A\ran\big)^2+\big( \mu_1\lan e_{21},A\ran+\mu_2\lan e_{12},A\ran\big)^2\\
&+4\mu_1\mu_2\big(\lan e_{11},A\ran\lan e_{22},A\ran-\lan e_{21},A\ran\lan e_{12},A\ran\big)
\endaligned
\end{equation}
By Lemma 3.2 of \cite{j-x-y3},
\begin{equation}\label{eq1}
\lan e_1\w\cdots\w e_n,A\ran\lan e_{11,22},A\ran-\lan e_{11},A\ran\lan e_{22},A\ran+\lan e_{12},A\ran\lan e_{21},A\ran=0.
\end{equation}
In conjunction with (\ref{La2}), (\ref{dw2}) and (\ref{eq1}), we have
\begin{equation}\aligned
\De \log w&=w^{-2}(w\De w-|\n w|^2)\\
&=-|B|^2-w^{-2}\Big[\big(\mu_1\lan e_{11},A\ran-\mu_2\lan e_{22},A\ran\big)^2+\big( \mu_1\lan e_{21},A\ran+\mu_2\lan e_{12},A\ran\big)^2\Big]
\endaligned
\end{equation}
whenever $w>0$. We thus have the following results from  our previous paper

\begin{pro}\cite{j-x-y3}
Let $M$ be a minimal submanifold of $\R^{n+m}$ with $G-rank\le 2$ and $w>0$. Then,
\begin{equation}\label{dew}
\De \log w\leq -|B|^2.
\end{equation}
\end{pro}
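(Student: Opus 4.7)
The plan is to combine the computations of $\De w$ and $|\n w|^2$ already set up in the excerpt with the algebraic identity \eqref{eq1} borrowed from \cite{j-x-y3}, and observe that the cross terms involving $4\mu_1\mu_2$ match up so that only a nonpositive remainder survives. Since under $G$-rank $\le 2$ we may choose frames at the considered point so that the shape operators take the canonical form \eqref{sf}, both $\De w$ and $|\n w|^2$ simplify drastically, involving only the four scalar quantities $\lan e_{11},A\ran$, $\lan e_{12},A\ran$, $\lan e_{21},A\ran$, $\lan e_{22},A\ran$ and the single bivector coefficient $\lan e_{11,22},A\ran$.

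First, I would take \eqref{La2}, which already reads
$$\De w = -|B|^2 w + 4\mu_1\mu_2\lan e_{11,22},A\ran,$$
and use \eqref{eq1} to rewrite the last term as
$$4\mu_1\mu_2\lan e_{11,22},A\ran = 4\mu_1\mu_2\, w^{-1}\bigl(\lan e_{11},A\ran\lan e_{22},A\ran-\lan e_{21},A\ran\lan e_{12},A\ran\bigr).$$
Next, I would take the expression \eqref{dw2} for $|\n w|^2$, in which the cross term has been deliberately separated as
$$4\mu_1\mu_2\bigl(\lan e_{11},A\ran\lan e_{22},A\ran-\lan e_{21},A\ran\lan e_{12},A\ran\bigr).$$
Thus when forming $w\De w-|\n w|^2$, the two $4\mu_1\mu_2$-cross terms cancel exactly, leaving
$$w\De w - |\n w|^2 = -|B|^2 w^2 - \bigl(\mu_1\lan e_{11},A\ran-\mu_2\lan e_{22},A\ran\bigr)^2 - \bigl(\mu_1\lan e_{21},A\ran+\mu_2\lan e_{12},A\ran\bigr)^2.$$

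Dividing by $w^2>0$ then yields
$$\De \log w = -|B|^2 - w^{-2}\bigl[\bigl(\mu_1\lan e_{11},A\ran-\mu_2\lan e_{22},A\ran\bigr)^2 + \bigl(\mu_1\lan e_{21},A\ran+\mu_2\lan e_{12},A\ran\bigr)^2\bigr] \le -|B|^2,$$
which is \eqref{dew}. The main conceptual obstacle, already handled in the excerpt, is achieving the simultaneous diagonalization \eqref{sf} of the two shape operators $A^1,A^2$; the only subtlety remaining in the present argument is the sign bookkeeping that matches the cross term in \eqref{La2} with the cross term isolated in \eqref{dw2} via the identity \eqref{eq1}. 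Since the inequality is pointwise and the frame choice can be made at each point, this completes the proof.
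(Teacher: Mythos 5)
Your proposal is correct and follows essentially the same route as the paper: it uses the normal form \eqref{sf} to reduce \eqref{La} to \eqref{La2}, splits $|\n w|^2$ as in \eqref{dw2}, and invokes the identity \eqref{eq1} so that the $4\mu_1\mu_2$ cross terms cancel in $w\De w-|\n w|^2$, leaving $\De\log w=-|B|^2$ minus a sum of squares. The sign bookkeeping you flag is handled exactly as in the paper, so nothing further is needed.
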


\begin{defi}\label{G-conformall}
Let $M$ be an $n$-dimensional minimal  submanifold in $\ir{n+m}$ , ($m\geq 2$).  A point $p\in M$ is called a \textit{G-conformal point}, if there exists an orthonormal basis $\{e_1,\cdots,e_n\}$ of $T_p M$,
such that $B_{e_i e_j}=0$ whenever $i\geq 3$ or $j\geq 3$, and
$$\lan B_{e_1 e_1},B_{e_1 e_2}\ran=0,\qquad |B_{e_1 e_1}|=|B_{e_1 e_2}|.$$
Moreover if each point of $M$ is a G-conformal point, we call $M$  a \textit{totally G-conformal minimal submanifold}.
\end{defi}

The formula (\ref{b4}) is for minimal submanifolds in $\ir{n+m}$ with codimension $m\ge 2$. In the present situation we can derive it directly and
 analyze its accuracy.  A straightforward calculation shows (see \cite{Si} \cite{x})
\begin{equation}\label{b}
\n^2 B=-\td{\mc{B}}-\ul{\mc{B}}.
\end{equation}
Here $\n^2$ denotes the trace-Laplace operator acting on any cross-section
of a vector bundle over $M$,
\begin{equation}
\td{\mc{B}}:=B\circ B^t\circ B
\end{equation}
with $B^t$ denoting the conjugate map of $B$, and
\begin{equation}
\ul{\mc{B}}:=\sum_{\a=1}^m \big(B_{A^\a A^\a(X),Y}+B_{X,A^\a A^\a(Y)}-2B_{A^\a(X),A^\a(Y)}\big).
\end{equation}
Hence
\begin{equation}\label{b1}
\aligned
\lan \td{\mc{B}},B\ran&=\lan B\circ B^t\circ B,B\ran=\lan B^t\circ B,B^t\circ B\ran\\
&=\lan B_{e_ie_j},B_{e_ke_l}\ran\lan B_{e_i e_j},B_{e_k e_l}\ran=h_{\a,ij}h_{\a,kl}h_{\be,ij}h_{\be,kl}\\
&=h_{\a,ij}h_{\be,ji}h_{\a,kl}h_{\be,lk}=(A^\a A^\be)_{ii}(A^\a A^\be)_{kk}\\
&=\sum_{\a,\be}\big[\tr(A^\a A^\be)\big]^2=4\mu_1^4+4\mu_2^4
\endaligned
\end{equation}
where the last step follows from (\ref{sf}), and
\begin{equation}\label{b2}
\aligned
\lan \ul{\mc{B}},B\ran=&\lan B_{A^\a A^\a(e_i),e_j}+B_{e_i,A^\a A^\a(e_j)}-2B_{A^\a(e_i),A^\a(e_j)},\nu_\be\ran \lan B_{e_i,e_j},\nu_\be\ran\\
=&\lan A^\be A^\a A^\a(e_i),e_j\ran \lan A^\be(e_j),e_i\ran+\lan A^\be A^\a A^\a(e_j),e_i\ran\lan A^\be(e_i),e_j\ran\\
 &-2\lan A^\be A^\a(e_i),A^\a(e_j)\ran \lan A^\be(e_j),e_i\ran\\
=&(A^\be A^\a A^\a)_{ij}(A^\be)_{ji}+(A^\be A^\a A^\a)_{ji}(A^\be)_{ij}-2(A^\a A^\be A^\a)_{ij}(A^\be)_{ji}\\
=&2\tr(A^\be A^\a A^\a A^\be-A^\a A^\be A^\a A^\be)=2\tr\big([A^\be,A^\a]A^\a A^\be\big)\\
=&\tr\big([A^\be,A^\a]A^\a A^\be\big)+\tr\big([A^\a,A^\be]A^\be A^\a\big)=-\sum_{\a,\be}\tr\big([A^\a,A^\be]^2\big)\\
=&-2\tr\big([A^1,A^2]^2\big)=16\mu_1^2\mu_2^2
\endaligned
\end{equation}
where
\begin{equation}
[A^1,A^2]=\left(\begin{array}{cccc}
0 & 2\mu_1\mu_2 & &\\
-2\mu_1\mu_2 & 0 & &\\
& & &\\
& & \multicolumn{1}{c}{\raisebox{0.5ex}[0pt]{\Huge o}}&
\end{array}\right).
\end{equation}
Substituting (\ref{b1}) and (\ref{b2}) into (\ref{b}) gives
\begin{equation}\label{b3}
\aligned
-\f{\lan \n^2 B,B\ran}{|B|^4}&=\f{\lan \td{\mc{B}}+\ul{\mc{B}},B\ran}{|B|^4}
=\f{4\mu_1^4+4\mu_2^4+16\mu_1^2\mu_2^2}{(2\mu_1^2+2\mu_2^2)^2}\\
&=1+\f{2\mu_1^2\mu_2^2}{(\mu_1^2+\mu_2^2)^2}\leq \f{3}{2}
\endaligned
\end{equation}
where the equality holds if and only if $\mu_1=\mu_2$.

\begin{pro}\label{simons}
Let $M$ be an $n$-dimensional minimal submanifold in $\ir{n+m}$ with codimension
$m\geq 2$. Then
\begin{equation*}
\De |B|^2\geq 2|\n B|^2-3|B|^4.
\end{equation*}
In the case of $G-rank\le 2$  the equality holds at $p\in M$ if and only if $p$ is a G-conformal point.

\end{pro}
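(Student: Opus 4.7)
The plan is to observe that the first inequality $\Delta|B|^2\geq 2|\n B|^2-3|B|^4$ is already the content of (\ref{b4}), which was established from Simons' identity for any minimal submanifold of $\R^{n+m}$ with $m\geq 2$. For the general statement I would therefore simply cite (\ref{b4}).

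For the equality clause under $G$-rank $\leq 2$, my plan is to replace the crude universal bound by the sharper one implicit in the calculation preceding the statement. Starting from the Simons decomposition (\ref{b}), $\n^2 B=-\td{\mc{B}}-\ul{\mc{B}}$, the identities (\ref{b1}) and (\ref{b2}) give $\lan\td{\mc{B}},B\ran=4\mu_1^4+4\mu_2^4$ and $\lan\ul{\mc{B}},B\ran=16\mu_1^2\mu_2^2$, while (\ref{sf}) gives $|B|^2=2(\mu_1^2+\mu_2^2)$. Assembled into (\ref{b3}) these yield
\[
-\f{\lan\n^2 B,B\ran}{|B|^4}=1+\f{2\mu_1^2\mu_2^2}{(\mu_1^2+\mu_2^2)^2}\leq \f{3}{2},
\]
where the elementary inequality $4\mu_1^2\mu_2^2\leq (\mu_1^2+\mu_2^2)^2$ is tight iff $\mu_1=\mu_2$. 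Substituting into $\Delta|B|^2=2|\n B|^2+2\lan\n^2 B,B\ran$ then gives the Simons inequality of the proposition, with equality at $p$ iff the two principal values coincide there.

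The last step is to identify $\mu_1=\mu_2$ with $p$ being a $G$-conformal point. In the frame (\ref{sf}) one reads off $B_{e_1 e_1}=\mu_1\nu_1$ and $B_{e_1 e_2}=\mu_2\nu_2$, so $\lan B_{e_1 e_1},B_{e_1 e_2}\ran=0$ and $B_{e_ie_j}=0$ whenever $i\geq 3$ or $j\geq 3$ are automatic, and the remaining condition $|B_{e_1 e_1}|=|B_{e_1 e_2}|$ of Definition \ref{G-conformall} reads exactly $\mu_1=\mu_2$. Conversely, any $G$-conformal frame at $p$ can be brought into the form (\ref{sf}) with equal eigenvalues, so the equality propagates back through (\ref{b3}). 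There is no real obstacle here beyond this bookkeeping, since the algebraic work was already carried out in (\ref{b})--(\ref{b3}); the only subtlety worth verifying is that the AM--GM step is tight precisely when $\mu_1=\mu_2$ and that this unambiguously matches Definition \ref{G-conformall} regardless of the residual freedom in choosing the tangential orthonormal frame.
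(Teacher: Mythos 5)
Your proposal is correct and takes essentially the same approach as the paper: the paper's own justification of Proposition \ref{simons} is precisely the computation (\ref{b})--(\ref{b3}) in the frame (\ref{sf}), with equality characterized by tightness of the bound $2\mu_1^2\mu_2^2/(\mu_1^2+\mu_2^2)^2\leq \f{1}{2}$, i.e. $\mu_1=\mu_2$, which is exactly the G-conformal condition of Definition \ref{G-conformall}. Your additional remark that the eigenvalues $\mu_1^2,\mu_2^2$ are invariant under the residual rotation freedom (via (\ref{o3})) is the same bookkeeping the paper carries out before stating the proposition.
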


In order to make use of the formula (\ref{b4}), we also need to
estimate $|\n B|^2$ in terms of $|\n |B||^2$. Schoen-Simon-Yau
\cite{s-s-y} obtained such an estimate for the hypersurface case. It
was generalized  to arbitrary codimension in \cite{x-y1} and refined
and generalized in \cite{x2}. In particular, if  $G-rank\le 2$ for $M$, we have a more precise estimate.

\begin{pro}\label{kato1}
If $M$ is an $n$-dimensional minimal  submanifold in $\R^{n+m}$ with $G-rank\le 2$, then
\begin{equation}\label{kato}
|\n B|^2\geq 2\big|\n |B|\big|^2.
\end{equation}
The equality holds at $p\in M$, if and only if there exist an orthonormal basis $\{e_1,\cdots,e_n\}$ of $T_p M$ and $\la_1,\la_2\in \R$, such that
$B_{e_ie_j}=0$ whenever $i\geq 3$ or $j\geq 3$, $\lan B_{e_1 e_1},B_{e_1 e_2}\ran=0$, $(\n_{e_k}B)_{e_i e_j}=0$ whenever $i\geq 3$, $j\geq 3$ or $k\geq 3$, and
\begin{equation}\label{kato2}
 \aligned
(\n_{e_1}B)_{e_1 e_1}&=\la_1 B_{e_1 e_1}-\la_2 B_{e_1 e_2},\\
(\n_{e_2}B)_{e_1 e_1}&=\la_2 B_{e_1 e_1}+\la_1 B_{e_1 e_2}.
\endaligned
\end{equation}
In particular, if $n=2$ and $m=1$, $|\n B|^2= 2\big|\n |B|\big|^2$ holds everywhere.
\end{pro}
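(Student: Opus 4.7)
The plan is to establish the inequality pointwise at each $p\in M$ where $|B|(p)>0$ (it is trivial otherwise). Following \S 3 I would choose a local orthonormal frame with $e_3,\dots,e_n$ spanning the relative nullity $\mc{K}$ in a neighborhood of $p$, and a normal frame realizing (\ref{sf}) at $p$; then $h_{\a,ij}\equiv 0$ near $p$ whenever $i\geq 3$ or $j\geq 3$, and at $p$ only $h_{1,11}=-h_{1,22}=\mu_1$ and $h_{2,12}=h_{2,21}=\mu_2$ are nonzero. Writing $h_{\a,ij;k}:=\lan(\n_{e_k}B)(e_i,e_j),\nu_\a\ran$, Codazzi gives full symmetry in $(i,j,k)$, and minimality gives $\sum_i h_{\a,ii;k}=0$.

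The structural input is the following vanishing: $h_{\a,ij;k}=0$ whenever at least two of $i,j,k$ are $\geq 3$. This is immediate from
\[ (\n_{e_k}B)(e_i,e_j)=\n^{\perp}_{e_k}\big(B(e_i,e_j)\big)-B(\n_{e_k}e_i,e_j)-B(e_i,\n_{e_k}e_j) \]
for $i,j\geq 3$ (each term vanishes because $h_{\a,ij}\equiv 0$ off the $2\times 2$ block), and Codazzi symmetry then moves the pair of large indices to any two slots. Combined with minimality this reduces to $h_{\a,11;k}+h_{\a,22;k}=0$ for every $\a,k$. The surviving components of $\n B$ are thus encoded by a \emph{principal} part $p_\a:=h_{\a,11;1}$, $q_\a:=h_{\a,11;2}$ and a \emph{nullity} part $P_{\a,k}:=h_{\a,11;k}$, $Q_{\a,k}:=h_{\a,12;k}$ ($k\geq 3$).

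Counting the multiplicities produced by Codazzi symmetry yields
\[ |\n B|^2 = 4\sum_\a(p_\a^2+q_\a^2)+6\sum_{\a,\,k\geq 3}(P_{\a,k}^2+Q_{\a,k}^2), \]
and evaluating $\n_{e_k}|B|^2=2\sum_{\a,i,j}h_{\a,ij}h_{\a,ij;k}$ at $p$ gives the three linear combinations $\n_{e_1}|B|^2=4(\mu_1p_1+\mu_2q_2)$, $\n_{e_2}|B|^2=4(\mu_1q_1-\mu_2p_2)$, and $\n_{e_k}|B|^2=4(\mu_1P_{1,k}+\mu_2Q_{2,k})$ for $k\geq 3$. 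Since $2\big|\n|B|\big|^2=\big|\n|B|^2\big|^2/(2|B|^2)$ and $|B|^2=2(\mu_1^2+\mu_2^2)$, the Cauchy--Schwarz bound $(\mu_1 a\pm\mu_2 b)^2\leq(\mu_1^2+\mu_2^2)(a^2+b^2)$ applied to each of the three squared linear forms above reduces the desired inequality $|\n B|^2\geq 2\big|\n|B|\big|^2$ to the manifest term-by-term estimate
\[ \sum_\a(p_\a^2+q_\a^2)+\tfrac32\sum_{\a,\,k\geq 3}(P_{\a,k}^2+Q_{\a,k}^2)\;\geq\; p_1^2+q_1^2+p_2^2+q_2^2+\sum_{k\geq 3}(P_{1,k}^2+Q_{2,k}^2), \]
which holds since $3/2\geq 1$. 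Saturation of the three Cauchy--Schwarz estimates forces $(p_1,q_2)$ and $(q_1,-p_2)$ to be parallel to $(\mu_1,\mu_2)$; saturation of the term-wise dominance forces $p_\a=q_\a=0$ for $\a\geq 3$ and $P_{\a,k}=Q_{\a,k}=0$ for all $\a$ and $k\geq 3$. Introducing the two proportionality constants as $\la_1,\la_2$ recovers exactly (\ref{kato2}); and the hypersurface case $n=2$, $m=1$ (where $\mu_2=0$ and there are no $k\geq 3$ indices) gives equality by direct substitution.

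The main difficulty is precisely the structural vanishing when two of $i,j,k$ are $\geq 3$: everything afterwards is essentially a two-dimensional algebraic computation, and the sharp constant $2$ arises because this vanishing kills every cross-term between $\mc{K}$ and the principal $2$-plane. The vanishing itself depends on fixing the frame smoothly compatibly with the integrable nullity distribution, which is furnished by the Chern--Kuiper construction used in \S 3.
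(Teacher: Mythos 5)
Your proposal is correct and follows essentially the same route as the paper's proof: the same nullity-adapted frame and normalization (\ref{sf}), the same structural vanishing $h_{\a,ijk}=0$ when two indices exceed $2$ (the paper's (\ref{b6}) combined with Codazzi symmetry), the same multiplicity count for $|\n B|^2$, and the same saturation analysis leading to (\ref{kato2}). The only difference is organizational: you apply Cauchy--Schwarz with the vector $(\mu_1,\mu_2)$ separately in each tangent direction $e_k$, whereas the paper first uses the triangle inequality on $\n|B|^2=\sum_\a\n|A^\a|^2$ and then Cauchy--Schwarz over $\a$; both yield the identical intermediate bound $\big|\n|B|\big|^2\le 2\sum_k\big(h_{1,11k}^2+h_{2,12k}^2\big)$.
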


\begin{proof}
It is sufficient for us to prove the inequality at the points where $|B|^2\neq 0$.

With the same notation $A^\a,\mu_1,\mu_2$  as in (\ref{sf}), the
triangle inequality yields
\begin{equation}\label{tri}\big|\n |B|^2\big|=\Big|\sum_\a \n |A^\a|^2\Big|\leq \sum_\a \big|\n |A^\a|^2\big|.
\end{equation}
By the Schwarz inequality, we obtain
\begin{equation}\aligned\label{na1}
  \big|\n |B|\big|^2&=\f{\big|\n |B|^2\big|^2}{4|B|^2}\leq\f{\Big(\sum_\a \big|\n |A^\a|^2\big|\Big)^2}{4\sum_\a |A^\a|^2}
= \f{\Big(\sum_\a \f{\nA{\a}}{|A^\a|}\cdot |A^\a|\Big)^2}{4\sum_\a |A^\a|^2}\\
&\leq \f{\sum\Big(\f{{\nA{\a}}^2}{|A^\a|^2}\Big)\cdot \sum_\a |A^\a|^2}{4\sum_\a |A^\a|^2}
=\sum_\a \f{{\nA{\a}}^2}{4 |A^\a|^2}\\
&=\f{{\nA{1}}^2}{4 |A^1|^2}+\f{{\nA{2}}^2}{4 |A^2|^2}.
\endaligned
\end{equation}
Note that here and in the sequel we set
$ \f{{\nA{\a}}^2}{4 |A^\a|^2}=0$ whenever $|A^\a|=0$.

Since $|A^\a|^2=\sum_{i,j}h_{\a,ij}^2$,
\begin{equation}\label{na}
\n_{e_k}|A^\a|^2=2h_{\a,ij}h_{\a,ijk}
\end{equation}
with
\begin{equation}
h_{\a,ijk}:=\lan (\n_{e_k}B)_{e_i e_j},\nu_\a\ran.
\end{equation}
As shown above, the assumption  $G-rank\le 2$ implies the existence
of a local orthonormal tangent frame field $\{e_i\}$ on an open
domain $U$ as shown before, such that $B_{e_i e_j}\equiv 0$ whenever
$i\geq 3$ or $j\geq 3$. Hence for arbitrary $i,j\geq 3$,
$$0=\n_{e_k}(B_{e_i e_j})=(\n_{e_k}B)_{e_i e_j}+B_{\n_{e_k}e_i,e_j}+B_{e_i,\n_{e_k}e_j}=(\n_{e_k}B)_{e_i e_j}$$
holds for all $k$, i.e.
\begin{equation}\label{b6}
h_{\a,ijk}=0\qquad \forall i,j\geq 3.
\end{equation}
It immediately follows that
\begin{equation}\label{b5}
0=\lan \n_{e_k} H,\nu_\a\ran=\sum_i h_{\a,iik}=h_{\a,11k}+h_{\a,22k}.
\end{equation}
In conjunction with (\ref{sf}), (\ref{na}) and (\ref{b5}), we get
$$\aligned
\nA{1}^2&=4\sum_k\big(\sum_{i,j}h_{1,ij}h_{1,ijk}\big)^2=4\sum_k(h_{1,11}h_{1,11k}+h_{1,22}h_{1,22k})^2\\
&=16\mu_1^2\sum_k h_{1,11k}^2=8|A^1|^2\sum_k h_{1,11k}^2
\endaligned$$
and moreover
\begin{equation}\label{na2}
\f{\nA{1}^2}{|A^1|^2}=8\sum_k h_{1,11k}^2.
\end{equation}
A similar calculation shows
\begin{equation}\label{na3}
\f{\nA{2}^2}{|A^2|^2}=8\sum_k h_{2,12k}^2.
\end{equation}
Substituting (\ref{na2}) and (\ref{na3}) into (\ref{na1}) implies
\begin{equation}\label{na4}
\big|\n|B|\big|^2\leq 2\sum_{k}h_{1,11k}^2+2\sum_k h_{2,12k}^2.
\end{equation}

On the other hand,
\begin{equation}\aligned\label{na5}
|\n B|^2=&\sum_{\a,i,j,k}h_{\a,ijk}^2\geq \sum_{i,j,k}h_{1,ijk}^2+\sum_{i,j,k}h_{2,ijk}^2\\
=&\ (h_{1,111}^2+h_{1,221}^2+h_{1,122}^2+h_{1,212}^2)+(h_{1,112}^2+h_{1,121}^2+h_{1,211}^2+h_{1,222}^2)\\
&+\sum_{k\geq 3}(h_{1,11k}^2+h_{1,1k1}^2+h_{1,k11}^2+h_{1,22k}^2+h_{1,2k2}^2+h_{1,k22}^2)\\
&+(h_{2,121}^2+h_{2,112}^2+h_{2,211}^2+h_{2,222}^2)+(h_{2,122}^2+h_{2,212}^2+h_{2,221}^2+h_{2,111}^2)\\
&+\sum_{k\geq 3}(h_{2,12k}^2+h_{2,k12}^2+h_{2,2k1}^2+h_{2,21k}^2+h_{2,k21}^2+h_{2,1k2}^2)\\
\geq& 4\sum_k h_{1,11k}^2+4\sum_k h_{2,12k}^2.
\endaligned
\end{equation}
Here we have used (\ref{b6}), (\ref{b5}) and $h_{\a,ijk}=h_{\a,ikj}$,
which is an immediate corollary of the Codazzi equations. Combining
this with (\ref{na4}) and (\ref{na5}) yields (\ref{kato}).

Now we determine the conditions ensuring that equality in (\ref{kato}) holds true at $p\in M$. Obviously $|\n B|^2=2\big|\n |B|\big|^2$ requires all the
equalities in (\ref{tri}), (\ref{na1}) and (\ref{na5}) hold simultaneously.

It is easily seen that  equality holds in (\ref{na5}) if and only if $h_{\a,ijk}=0$ whenever one of the indices is no less than $3$. Hence by (\ref{na}),
\begin{equation}\aligned
 \n |A^1|^2=&(2h_{1,11}h_{1,111}+2h_{1,22}h_{1,221})e_1+(2h_{1,11}h_{1,112}+2h_{1,22}h_{1,222})e_2\\
 =&4\mu_1(h_{1,111}e_1+h_{1,112}e_2),\\
\n |A^2|^2=&(2h_{2,12}h_{2,121}+2h_{2,21}h_{2,211})e_1+(2h_{2,12}h_{2,122}+2h_{2,21}h_{2,212})e_2\\
=&4\mu_2(h_{2,112}e_1-h_{2,111}e_2),
\endaligned
\end{equation}
and
\begin{equation}\aligned
v_1:=\f{\n |A^1|^2}{|A^1|}&=2\sqrt{2}(h_{1,111}e_1+h_{1,112}e_2),\\
v_2:=\f{\n |A^2|^2}{|A^2|}&=2\sqrt{2}(h_{2,112}e_1-h_{2,111}e_2).
\endaligned
\end{equation}
(\ref{tri}) and (\ref{na1}) hold true if and only if the following 2 conditions are satisfied: (i) $\n |A^1|^2$ and $\n |A^2|^2$ point in the same direction, (ii)
 $(|v_1|,|v_2|)$ and $(\mu_1,\mu_2)$ are linearly depedent. Hence there exist $\la_1,\la_2\in \R$, such that
$$\aligned
h_{1,111}e_1+h_{1,112}e_2&=\mu_1(\la_1 e_1+\la_2 e_2),\\
h_{2,112}e_1-h_{2,111}e_2&=\mu_2(\la_1 e_1+\la_2 e_2).
\endaligned$$
This is equivalent to
\begin{equation}\aligned
 (\n_{e_1}B)_{e_1 e_1}&=\mu_1\la_1 \nu_1-\mu_2\la_2 \nu_2=\la_1 B_{e_1 e_1}-\la_2 B_{e_1 e_2},\\
 (\n_{e_2}B)_{e_1 e_1}&=\mu_1\la_2 \nu_1+\mu_2\la_1 \nu_2=\la_2 B_{e_1 e_1}+\la_1 B_{e_1 e_2}.
\endaligned
\end{equation}
\end{proof}

In conjunction with (\ref{b4}) and (\ref{kato}), we arrive at
\begin{equation}\label{si2}
\De|B|^2\geq 4\big|\n|B|\big|^2-3|B|^4.
\end{equation}

\Section{Curvature estimates}{Curvature estimates}

We are ready to derive the curvature estimates, in a manner similar to \cite{e-h}. When $w>0$, we put $v:=w^{-1}$, then (\ref{dew}) is equivalent to
\begin{equation}\label{dew20}
\De v\geq |B|^2 v+v^{-1}|\n v|^2.
\end{equation}
From (\ref{dew20}) and (\ref{si2}), a straightforward calculation shows
$$\aligned
&\De\big(|B|^{2s}v^q\big)\\
=&\De\big(|B|^{2s}\big)v^q+|B|^{2s}\De v^q+2\lan \n |B|^{2s},\n v^q\ran\\
\geq&s|B|^{2s-2}\Big(4\big|\n |B|\big|^2-3|B|^4\Big)v^q+4s(s-1)|B|^{2s-2}\big|\n |B|\big|^2 v^q\\
&+q|B|^{2s}v^{q-1}\big(|B|^2 v+v^{-1}|\n v|^2\big)+q(q-1)|B|^{2s}v^{q-2}|\n v|^2\\
&+4sq|B|^{2s-1}v^{q-1}\lan \n|B|,\n v\ran\\
\geq&(-3s+q)|B|^{2s+2}v^q+4s^2|B|^{2s-2}\big|\n |B|\big|^2v^q\\
&+q^2|B|^{2s}v^{q-2}|\n v|^2+4sq|B|^{2s-1}v^{q-1}\lan \n |B|,\n v\ran.
\endaligned$$
It follows that
\begin{equation}\label{es3}
\De\big(|B|^{2s}v^q\big)\geq (-3s+q)|B|^{2s+2}v^q
\end{equation}
for arbitrary $s,q\geq 1$.

Let $t=2s+1$, then
\begin{equation}
\De\big(|B|^{t-1}v^q\big)\geq \big(q-\f{3t-3}{2}\big)|B|^{t+1}v^q
\end{equation}
for arbitrary $t\geq 3$ and $q\geq 1$. Whenever $q>\f{3t-3}{2}$,
putting $C_1(t,q)=\big(q-\f{3t-3}{2}\big)^{-1}$ gives
\begin{equation}
|B|^{2t}v^{2q}\eta^{2t}\leq C_1 \De\big(|B|^{t-1}v^q\big)|B|^{t-1}v^q\eta^{2t}
\end{equation}
with $\eta$ being an arbitrary smooth function in $M$ with compact supporting set. Integrating both sides of the above inequality over $M$ implies
$$\aligned
&\int_M |B|^{2t}v^{2q}\eta^{2t}*1\\
\leq &C_1\int_M \De\big(|B|^{t-1}v^q\big)|B|^{t-1}v^q\eta^{2t}*1\\
=&-C_1\int_M \Big\lan \n\big(|B|^{t-1}v^q\big),\n\big(|B|^{t-1}v^q \eta^{2t}\big)\Big\ran*1\\
=&-C_1\int_M \Big|\n\big(|B|^{t-1}v^q\big)\Big|^2\eta^{2t}*1-2tC_1\int_M |B|^{t-1}v^q\eta^{2t-1}  \lan \n\big(|B|^{t-1}v^q\big),\n \eta\ran*1\\
\leq&-C_1\int_M \Big|\n\big(|B|^{t-1}v^q\big)\Big|^2\eta^{2t}*1+C_1\int_M \Big|\n\big(|B|^{t-1}v^q\big)\Big|^2\eta^{2t}*1\\
&+C_1t^2\int_M |B|^{2t-2}v^{2q}\eta^{2t-2}|\n \eta|^2*1\\
\leq&C_1t^2\Big(\f{t-1}{t}\ep^{\f{t}{t-1}}\int_M |B|^{2t}v^{2q}\eta^{2t}*1+\f{1}{t}\ep^{-t}\int_M v^{2q}|\n \eta|^{2t}*1\Big)
\endaligned
$$
for arbitrary $\ep>0$. Here we have used
Stokes' theorem and Young's inequality.
Choosing $\ep$ such that $C_1t(t-1)\ep^{\f{t}{t-1}}=\f{1}{2}$ gives
\begin{equation}\label{es1}
\Big(\int_M |B|^{2t}v^{2q}\eta^{2t}*1\Big)^{\f{1}{t}}\leq C_2(t,q)\Big(\int_M v^{2q}|\n \eta|^{2t}*1\Big)^{\f{1}{t}}
\end{equation}
for arbitrary $t\geq 3$ and $q>\f{3t-3}{2}$.

\begin{thm}\label{t1}
Let $M$ be an $n$-dimensional minimal submanifold (not necessarily complete)  in $\R^{n+m}$ with $G-rank\le 2$ and positive $w$-function  on $M$. Let $\rho:M\times M\ra \R$ be a distance function on $M$, such that $|\n \rho(\cdot,p)|\leq 1$ for each $p\in M$.
 Fix $p_0\in M$,  and denote by $B_R=B_R(p_0):=\{p\in M:\rho(p,p_0)<R\}$ the distance ball centered at $p_0$ and of radius $R$.
Assume $B_{R_0}\subset B_R\subset\subset M$, then for arbitrary $t\geq 3$ and $q>\f{3t-3}{2}$, there exists a positive constant $C_3$, depending only on $t$ and $q$, such that
\begin{equation}\label{es2}
\big\| |B|^{2}v^{\f{2q}{t}}\big\|_{L^t(B_{R_0})} \leq C_3(R-R_0)^{-2}\big\|v^{\f{2q}{t}}\big\|_{L^t(B_R)}.
\end{equation}
with $v:=w^{-1}$.
\end{thm}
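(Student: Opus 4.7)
The proof is a direct application of the integral inequality (\ref{es1}) with a carefully chosen cutoff function, so the strategy is essentially a cutoff argument using the intrinsic distance function.

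First I would observe that the conclusion (\ref{es2}) is exactly (\ref{es1}) once $\eta$ is specialized to a cutoff adapted to the pair $B_{R_0}\subset B_R$. Indeed, unpacking norms,
\begin{equation*}
\bigl\| |B|^2 v^{\f{2q}{t}}\bigr\|_{L^t(B_{R_0})}^t
= \int_{B_{R_0}} |B|^{2t} v^{2q}*1,
\qquad
\bigl\| v^{\f{2q}{t}}\bigr\|_{L^t(B_R)}^t
= \int_{B_R} v^{2q}*1,
\end{equation*}
so we need to control the left-hand side by an integral over $B_R$ with a factor $(R-R_0)^{-2t}$ inside the $t$-th root.

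The plan is therefore to construct a nonnegative Lipschitz cutoff $\eta$ on $M$ satisfying $\eta\equiv 1$ on $B_{R_0}$, $\text{supp}(\eta)\subset B_R\subset\subset M$, and $|\n\eta|\leq C(R-R_0)^{-1}$ almost everywhere. The natural candidate is $\eta(p)=\phi\bigl(\rho(p,p_0)\bigr)$ where $\phi:\R\to[0,1]$ is smooth with $\phi\equiv 1$ on $(-\infty,R_0]$, $\phi\equiv 0$ on $[R,\infty)$, and $|\phi'|\leq 2(R-R_0)^{-1}$. Since by hypothesis $|\n\rho(\cdot,p_0)|\leq 1$, the chain rule gives $|\n\eta|\leq 2(R-R_0)^{-1}$, so $|\n\eta|^{2t}\leq C(t)(R-R_0)^{-2t}$ on all of $M$.

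Plugging this $\eta$ into (\ref{es1}) yields
\begin{equation*}
\Bigl(\int_{B_{R_0}} |B|^{2t} v^{2q} *1\Bigr)^{\f{1}{t}}
\leq \Bigl(\int_M |B|^{2t} v^{2q} \eta^{2t} *1\Bigr)^{\f{1}{t}}
\leq C_2(t,q)\,C(t)\,(R-R_0)^{-2}\Bigl(\int_{B_R} v^{2q}*1\Bigr)^{\f{1}{t}},
\end{equation*}
which is exactly (\ref{es2}) with $C_3:=C_2(t,q)\,C(t)$, depending only on $t$ and $q$.

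The only mild technical point is that, because the intrinsic distance function $\rho(\cdot,p_0)$ need not be smooth, $\eta$ as constructed is Lipschitz but not $C^\infty$. This is routinely handled either by verifying that the derivation of (\ref{es1}) carries through for Lipschitz compactly supported $\eta$ (each integration by parts and Young inequality step is still valid), or by a standard mollification argument producing smooth $\eta_k\to\eta$ with $|\n\eta_k|$ uniformly bounded by $2(R-R_0)^{-1}+o(1)$ and passing to the limit. I regard this as the main (albeit routine) obstacle; no new geometric input beyond the Simons-type bound and the superharmonicity of $\log w$ already encoded in (\ref{es1}) is needed.
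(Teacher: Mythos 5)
Your proposal is correct and follows essentially the same route as the paper: both insert $\eta=\psi\circ\rho(\cdot,p_0)$, with $\psi$ a standard bump function equal to $1$ on $[0,R_0]$, supported in $[0,R)$, and $|\psi'|\leq c_0(R-R_0)^{-1}$, into (\ref{es1}) and use $|\n \rho(\cdot,p_0)|\leq 1$ to absorb the gradient term. Your extra remark on handling a merely Lipschitz cutoff is a reasonable (if routine) refinement that the paper does not bother to spell out.
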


\begin{proof} We let
 $\psi$  be a standard bump function on $[0,\infty)$ with $\text{supp}(\psi)\subset [0,R)$, $\psi\equiv 1$ on $[0,R_0]$ and $|\psi'|\leq c_0(R-R_0)^{-1}$. Inserting $\eta=\psi\circ \rho(\cdot,p_0)$ in (\ref{es1}), we have
\begin{equation}
\aligned
&\big\| |B|^2v^{\f{2q}{t}}\big\|_{L^t(B_{R_0})}=\Big(\int_{B_{R_0}}|B|^{2t}v^{2q}*1\Big)^{\f{1}{t}}\leq
\Big(\int_M |B|^{2t}v^{2q}\eta^{2t}*1\Big)^{\f{1}{t}}\\
\leq& C_2\Big(\int_M v^{2q}|\n \eta|^{2t}*1\Big)^{\f{1}{t}}
=C_2\Big(\int_{B_R}v^{2q}|\psi'|^{2t}|\n \rho(\cdot,p_0)|^{2t}*1\Big)^{\f{1}{t}}\\
\leq& C_3(R-R_0)^{-2}\Big(\int_{B_R} v^{2q}*1\Big)^{\f{1}{t}}
=C_3(R-R_0)^{-2}\big\|v^{\f{2q}{t}}\big\|_{L^t(B_R)}.
\endaligned
\end{equation}
\end{proof}

Furthermore, the mean value inequality for subharmonic functions on minimal
submanifolds in Euclidean space  can be applied to deduce a pointwise
estimate for $|B|^2$.

\begin{thm}\label{t2}
Our assumption of $M$ is the same as in Theorem \ref{t1}. Denote by $D_R=D_R(p_0)$ the exterior ball centered at $p_0$
and of radius $R$, then for every $t\geq 3$, there exists a positive constant $C_4$ only depending on $t$, such that
\begin{equation}\label{es4}
(|B|^2 v^3)(p_0)\leq C_4 R^{-2}(\max_{D_R}v)^3\Big(\f{V(R)}{V(\f{R}{2})}\Big)^{\f{1}{t}}.
\end{equation}
Here $V(R)=V(p_0,R):=\text{Vol}(D_R(p_0)).$
\end{thm}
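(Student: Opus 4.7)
The plan is to combine the subharmonicity of $|B|^2 v^3$ with the mean value inequality for subharmonic functions on minimal submanifolds, closing the argument via the $L^t$ estimate of Theorem~\ref{t1}. First, I would observe that estimate (\ref{es3}), specialized to $s=1,\,q=3$, gives $\Delta(|B|^2 v^3) \geq 0$. Hence $u := |B|^2 v^3$ is non-negative and subharmonic on $M$, and for any $t \geq 3$ the function $u^t = |B|^{2t} v^{3t}$ is subharmonic as well, since $\Delta u^t = t u^{t-1}\Delta u + t(t-1) u^{t-2}|\nabla u|^2 \geq 0$.

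Since $M$ is a minimal submanifold of $\R^{n+m}$, the classical mean value inequality of Michael--Simon / Hoffman--Spruck applies to non-negative subharmonic functions. Applied to $f = u^t$ on the extrinsic ball $D_{R/2}(p_0)$, this yields
\begin{equation*}
(|B|^2 v^3)^t(p_0) \leq \frac{C(n)}{R^n}\int_{D_{R/2}} |B|^{2t} v^{3t} * 1.
\end{equation*}

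To control the integral on the right, I would take $\rho$ to be the restriction of the ambient Euclidean distance $|p - p_0|_{\R^{n+m}}$ (which satisfies $|\nabla \rho| \leq 1$, so that $B_r(p_0) = D_r(p_0) \cap M$) and apply Theorem~\ref{t1} with the parameter choice $q = 3t/2$. This choice makes $2q/t = 3$ match the exponent of $v$, and $q - (3t-3)/2 = 3/2 > 0$ satisfies the admissibility hypothesis. Combined with the trivial bound $\int_{D_R} v^{3t} \leq (\max_{D_R} v)^{3t}\, V(R)$, Theorem~\ref{t1} gives
\begin{equation*}
\Big(\int_{D_{R/2}} |B|^{2t} v^{3t} * 1\Big)^{1/t} \leq C_3(t)\, R^{-2}\,(\max_{D_R} v)^3\, V(R)^{1/t}.
\end{equation*}
Raising this to the $t$-th power, substituting into the mean value estimate, and taking $t$-th roots produces $(|B|^2 v^3)(p_0) \leq C\, R^{-n/t - 2}\, (\max_{D_R} v)^3\, V(R)^{1/t}$. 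The concluding step is the Euclidean monotonicity formula $V(R/2) \geq \omega_n (R/2)^n$ for minimal submanifolds, which yields $R^{-n/t} \leq C\, V(R/2)^{-1/t}$ and rearranges the right-hand side into exactly $C_4 R^{-2} (\max_{D_R} v)^3\, (V(R)/V(R/2))^{1/t}$, as claimed in (\ref{es4}).

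The main obstacle is essentially bookkeeping: one must align the exponent on $v$ forced by the subharmonic quantity ($v^3$) with a permissible pair $(t,q)$ in Theorem~\ref{t1}, and then correctly extract the residual factor $R^{-n/t}$ from the mean value inequality to absorb it into the volume ratio via the monotonicity formula. No fundamentally new analytic input is required beyond these three classical ingredients.
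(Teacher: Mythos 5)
Your strategy---subharmonicity from (\ref{es3}), a mean value inequality, and (\ref{es1}) with $q=3t/2$---is the same as the paper's, and everything up to your last paragraph is sound. The gap is the concluding step: the monotonicity formula runs in the wrong direction. From $V(R/2)\geq \omega_n(R/2)^n$ you may deduce $V(R/2)^{-1/t}\leq C\,R^{-n/t}$, but what you need is the reverse inequality $R^{-n/t}\leq C\,V(R/2)^{-1/t}$, i.e.\ an upper bound $V(R/2)\leq C(t)\,R^n$. No such bound holds for general minimal submanifolds: the density ratio $V(r)/(\omega_n r^n)$ is $\geq 1$ but can be arbitrarily large under the theorem's hypotheses (for the entire minimal graph of $z\mapsto z^k$ in $\R^4$ it is comparable to $k$; for the graph of $z\mapsto e^z$ one has $V(R)\sim R^3$, not $O(R^2)$). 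Consequently what your argument actually yields is $(|B|^2v^3)(p_0)\leq C\,R^{-2-n/t}(\max_{D_R}v)^3\,V(R)^{1/t}$, which by monotonicity is \emph{implied by} (\ref{es4}) but does not imply it. The loss matters downstream: in the Bernstein theorem that follows, $M$ is only assumed to have polynomial volume growth of arbitrary order $l$; along the chosen subsequence of radii the factor $(V(R)/V(R/2))^{1/t}$ stays bounded, whereas your factor $R^{-n/t}V(R)^{1/t}$ can grow like $R^{(l-n)/t}$ and overwhelm $R^{-2}$ once $l$ is large.

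The paper obtains the volume ratio by invoking the mean value inequality in the averaged form $\int_{D_{R/2}}|B|^{2t}v^{3t}*1\geq \big((|B|^2v^3)(p_0)\big)^t\,V(R/2)$, i.e.\ it compares the integral with the actual volume $V(R/2)$ of the extrinsic ball rather than with $\omega_n(R/2)^n$. That averaged form is strictly stronger than the Michael--Simon form you quote (precisely because $V(R/2)\geq\omega_n(R/2)^n$), so it cannot be recovered from your version plus monotonicity; it is exactly the input you are missing. To complete the proof along these lines you must use (or establish) the mean value inequality normalized by $V(R/2)$, not by $\omega_n(R/2)^n$.
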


\begin{proof}
 Let $F: M\ra \R^{n+m}$ be the isomorphic immersion and denote by
 $r:M\times M\ra \R$ the restriction of the Euclidean distance function. Without loss of generality one can assume $F(p_0)=0$ for $p_0\in M$, then
$r^2(\cdot,p_0)=\lan F,F\ran.$ This extrinsic distance function $r$ on $M$ satisfies the assumptions of Theorem \ref{t1}.

Letting $q=\f{3t}{2}$ in (\ref{es1}) yields
\begin{equation}\label{es5}
\Big(\int_M |B|^{2t}v^{3t}\eta^{2t}*1\Big)^{\f{1}{t}}\leq C_2\Big(\int_M v^{3t}|\n \eta|^{2t}*1\Big)^{\f{1}{t}}.
\end{equation}
Let $\eta$  be a cut-off function on $M$ with $\text{supp}\
\eta\subset B_R$, $\eta|_{B_{\f{R}{2}}}\equiv 1$ and $|\n \eta|\leq
c_0R^{-1}$ (the construction of the auxiliary function is the same as in Theorem \ref{t1}). Then
\begin{equation}
\Big(\int_M v^{3t}|\n \eta|^{2t}*1\Big)^{\f{1}{t}}\leq C_5(t)R^{-2}(\max_{D_R}v)^3V(R)^{\f{1}{t}}.
\end{equation}
By (\ref{es3}), $|B|^{2t}v^{3t}$ is a subharmonic function on $M$, and
by the mean value inequality,
\begin{equation}\label{es6}
\Big(\int_M |B|^{2t}v^{3t}\eta^{2t}*1\Big)^{\f{1}{t}}\geq \Big(\int_{D_{\f{R}{2}}}|B|^{2t}v^{3t}*1\Big)^{\f{1}{t}}\geq (|B|^2 v^3)(p_0)V\left(\f{R}{2}\right)^{\f{1}{t}}.
\end{equation}
In conjunction with (\ref{es5})-(\ref{es6}) we arrive at (\ref{es4}).

\end{proof}

From the preceding curvature estimates we immediately get the following Bernstein type theorem.

\begin{thm}
Let $M$ be an $n$-dimensional complete minimal submanifold in
$\R^{n+m}$ with $G-rank\le 2$ and a positive $w$-function.
If $M$ has polynomial volume growth and the function $v=w^{-1}$  has growth
\begin{equation}
\max_{D_R(p_0)}v=o(R^{\f{2}{3}})
\end{equation}
for a fixed point $p_0$, then $M$ has to be an affine linear subspace.
\end{thm}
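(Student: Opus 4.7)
The plan is to combine the pointwise curvature estimate \eqref{es4} from Theorem \ref{t2} with the polynomial volume growth to conclude that $|B|$ vanishes identically on $M$.

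Fix an arbitrary point $p_0 \in M$ and any exponent $t \ge 3$. By Theorem \ref{t2},
$$
(|B|^2 v^3)(p_0) \le C_4\, R^{-2}\Bigl(\max_{D_R(p_0)} v\Bigr)^3 \Bigl(\f{V(R)}{V(R/2)}\Bigr)^{\f{1}{t}}.
$$
The growth hypothesis $\max_{D_R} v = o(R^{2/3})$ gives $R^{-2}(\max_{D_R} v)^3 \to 0$ as $R \to \infty$, so the only possible obstruction is that the doubling ratio $V(R)/V(R/2)$ could grow without bound. I would remove this obstruction by working along a subsequence of radii for which the doubling ratio stays controlled, which is where polynomial volume growth enters.

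Specifically, suppose $V(R) \le C R^N$ for some $N$. Along dyadic radii $R_j = 2^j R_*$, the telescoping identity
$$
\log V(R_k) - \log V(R_*) = \sum_{j=0}^{k-1}\log\f{V(R_{j+1})}{V(R_j)}
$$
combined with $\log V(R_k) \le k N \log 2 + O(1)$ shows that the summands $\log(V(R_{j+1})/V(R_j))$ have Cesaro mean at most $N\log 2 + o(1)$. Consequently, infinitely many indices $j$ satisfy $V(R_{j+1})/V(R_j) \le 2^{N+1}$, producing a sequence $R = R_{j+1} \to \infty$ along which the doubling factor in \eqref{es4} is uniformly bounded.

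Passing to the limit along this sequence in \eqref{es4} forces $(|B|^2 v^3)(p_0) = 0$. Since $w(p_0) > 0$ and hence $v(p_0) > 0$, this yields $|B|(p_0) = 0$. As $p_0 \in M$ was arbitrary, $B \equiv 0$ on $M$, so $M$ is totally geodesic; being complete, it must be an affine linear $n$-subspace of $\R^{n+m}$. The main technical step is really \eqref{es4} itself, which already packages the Simons-type inequality \eqref{si2} and the subharmonicity \eqref{dew} of $\log v$ into a single pointwise bound; the volume-doubling extraction is a standard reduction and should pose no real difficulty.
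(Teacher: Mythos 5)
Your overall strategy is the same as the paper's: combine the pointwise estimate \eqref{es4} of Theorem \ref{t2} with an extraction of dyadic radii along which the volume doubling ratio stays bounded, so that $R^{-2}\bigl(\max_{D_R}v\bigr)^3\to 0$ kills $(|B|^2v^3)$ in the limit. Your Ces\`aro-mean/pigeonhole extraction is correct and essentially equivalent to the paper's argument: the paper instead shows by contradiction that $\liminf_{k\to\infty} V(2^{k+1})/V(2^k)\le 2^l$ under $V(R)=O(R^l)$, which is the same dyadic pigeonhole fact, and then takes $t=3$ in \eqref{es4} along the resulting radii.

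There is, however, one genuine (though easily repaired) gap at the end. The hypotheses of the theorem --- polynomial volume growth and $\max_{D_R(p_0)}v=o(R^{2/3})$ --- are assumed only at the \emph{single fixed point} $p_0$, whereas your proof opens with ``fix an arbitrary point $p_0$'' and closes with ``as $p_0$ was arbitrary, $B\equiv 0$''; in other words, you silently use both growth conditions with an arbitrary center, which is not what is given. To conclude $B\equiv 0$ you must first show that the hypotheses propagate from $p_0$ to every $p\in M$: setting $R_0:=r(p,p_0)$, the triangle inequality gives $D_R(p)\subset D_{R+R_0}(p_0)$, hence $V(p,R)\le V(p_0,R+R_0)=O\big((R+R_0)^l\big)=O(R^l)$ and $\max_{D_R(p)}v\le \max_{D_{R+R_0}(p_0)}v=o\big((R+R_0)^{2/3}\big)=o(R^{2/3})$, and only then can your limiting argument be repeated at $p$ to get $|B|(p)=0$. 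The paper carries out exactly this step explicitly; with that insertion your proof is complete.
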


\begin{rem}
Here, we say that
$M$ has polynomial volume growth iff there exists $l\geq 0$ with $V(R)=V(p_0,R)=O(R^l)$.
\end{rem}

\begin{proof}

Let $c_1$ be a positive constant such that
\begin{equation}\label{growth}
V(R)\leq c_1 R^l.
\end{equation}
Now we claim
\begin{equation}\label{claim}
\liminf_{k\ra \infty}\f{V(2^{k+1})}{V(2^k)}\leq 2^l.
\end{equation}
Otherwise, there are $\ep>0$ and a positive integer $N$, such that for any
$k\geq N$,
$$\f{V(2^{k+1})}{V(2^k)}\geq 2^l+\ep.$$
Thus,
$$\f{V(2^k)}{(2^k)^l}\geq \f{V(2^N)(2^l+\ep)^{k-N}}{(2^N)^l(2^l)^{k-N}}
=\f{V(2^N)}{(2^N)^l}\Big(\f{2^l+\ep}{2^l}\Big)^{k-N}.$$
It follows that
$$\lim_{k\ra \infty} \f{V(2^k)}{(2^k)^l}=+\infty$$
which contradicts  (\ref{growth}).

(\ref{claim}) implies the existence of a sequence $\{k_i:i\in \Bbb{N}\}$,
such that $k_i<k_j$ whenever $i<j$, $\lim_{i\ra \infty}k_i=\infty$ and
$$\f{V(2^{k_i+1})}{V(2^{k_i})}\leq 2^l.$$
 then putting $R=R_i:=2^{k_i+1}$ and letting $t=3$ in (\ref{es4}) give
\begin{equation}
(|B|^2 v^3)(p_0)\leq C_4 2^{\f{l}{3}}R_i^{-2}(\max_{D_{R_i}}v)^3
\end{equation}
 Since $\max_{D_R}v=o(R^{\f{2}{3}})$, letting $i\ra \infty$ yields $|B|^2=0$ at $p_0$.

For arbitrary $p\in M$, put $R_0:=r(p,p_0)$, then
the triangle inequality implies $D_{R}(p)\subset D_{R+R_0}(p_0)$ for any $R\geq 0$,
hence
$$\f{V(p,R)}{R^l}\leq \f{V(p_0,R+R_0)}{R^l}\leq \f{c_1(R+R_0)^l}{R^l}$$
which means $V(p,R)=O(R^l)$. Similarly one can show $\max_{D_R(p)}v=o(R^{\f{2}{3}})$
for arbitrary $p$. Thereby one can proceed as above to arrive at $|B|^2=0$ at $p$. Hence $|B|\equiv 0$ on $M$ and $M$ has to be affine linear.

\end{proof}

\Section{Graphical cases}{Graphical cases}

Let $f=(f^1,\cdots,f^n):\Om\subset \R^n\ra \R^m$ be a vector-valued function, then the graph
$M=\{(x,f(x)):x\in \Om\}$ is an embedded  submanifold in $\R^{n+m}$. Let $\{\ep_i,\ep_{n+\a}\}$ be the standard orthonormal basis, and put $A=\ep_1\w \cdots\w \ep_n$,
then as shown in \cite{j-x}, the $w$-function is positive everywhere on $M$ and the volume element of $M$ is
\begin{equation}
*1=v\ dx^1\w\cdots\w dx^n,
\end{equation}
where
\begin{equation}\label{v-f}
v=w^{-1}=\left[\det\Big(\de_{ij}+\sum_\a \f{\p f^\a}{\p x^i}\f{\p f^\a}{\p x^j}\Big)\right]^{\f{1}{2}}.
\end{equation}

Without loss of generality we can assume $f(0)=0$. Denote $p_0=(0,0)$,
then
\begin{equation}
D_R=D_R(p_0)=\{(x,f(x)):|x|^2+|f(x)|^2\leq R^2\}.
\end{equation}
Denote
\begin{equation}
\Om_R=\{x\in \Om:|x|^2+|f(x)|^2\leq R^2\},
\end{equation}
then obviously $\Om_R\subset \Bbb{D}^n(R)$ and $D_R$ is just the graph over $\Om_R$, where $\Bbb{D}^n(R)$ is the $n$-dimensional Euclidean ball of radius $R$.
Hence if
\begin{equation}
\max_{D_R}v\leq CR^l,
\end{equation}
then
\begin{equation}\aligned
V(R)&=\int_{D_R}*1=\int_{\Om_R}v dx^1\w\cdots\w dx^n\\
&\leq \max_{D_R}v\cdot \text{Vol}(\Om_R)\leq CR^l\text{Vol}(\Bbb{D}^n(R))\\
&=C\om_n R^{n+l}
\endaligned
\end{equation}
with $\om_n$ being the volume of the $n$-dimensional unit Euclidean
ball. This  means that
the exterior balls of a graph have polynomial volume growth whenever
the $v$-function has polynomial growth. This fact leads us to  the following result.

\begin{thm}\label{t3}
Let $M=\{(x,f(x)):x\in \R^n\}$ be an entire minimal graph given by a vector-valued function $f:\R^n\ra \R^m$ with  $G-rank\le 2$. If the slope of $f$
satisfies
\begin{equation}
\De_f=\left[\det\Big(\de_{ij}+\sum_{\a}\f{\p f^\a}{\p x^i}\f{\p f^\a}{\p x^j}\Big)\right]^{\f{1}{2}}=o(R^{\f{2}{3}}),
\end{equation}
where $R^2=|x|^2+|f(x)|^2$,
then $f$ has to be an affine linear function.
\end{thm}

Now we study $2$-dimensional cases. It is well-known that every oriented $2$-dimensional Riemannian manifold $M$  admits a local isothermal coordinate chart around any point. More precisely, each $p\in M$ has a coordinate neighborhood
$(U;u,v)$, such that
$$g=\la^2(du^2+dv^2)$$
on $U$ with a positive function $\la$.  In fact, for minimal entire
graphs, one can find a global isothermal coordinate chart:

\begin{lem}(\cite{o} \S 5)\label{iso}
Let $M=\{(x,f(x):x\in \R^2\}$ be a $2$-dimensional entire minimal graph in
$\R^{2+m}$, then there exists a nonsigular linear transformation
\begin{equation}\aligned
u_1&=x_1\\
u_2&=ax_1+bx_2,\qquad (b>0)
\endaligned
\end{equation}
such that $(u_1,u_2)$ are global isothermal parameters for $M$.

\end{lem}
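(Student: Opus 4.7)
The plan is to combine the parabolic uniformization of $M$ with the Weierstrass representation for minimal surfaces and Liouville's theorem.

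First, I would note that the induced metric on $M$ satisfies $g \geq \delta$, where $\delta$ is the flat metric on the base $\R^2$, so $M$ is a complete, simply connected $2$-dimensional Riemannian manifold; moreover $x_1$ is proper and harmonic on $M$ (harmonic because $M$ is minimal and $x_1$ is the restriction of an ambient linear function). The uniformization theorem together with Osserman's parabolicity theorem for entire minimal graphs (cf.~\cite{o}) then produces a global conformal diffeomorphism $\Psi: \C \to M$. Writing $z = U_1 + i U_2$ for the standard complex coordinate on $\C$, the pull-backs $X_j := x_j \circ \Psi$ and $F^\a := f^\a \circ \Psi$ are harmonic on $\C$, so $\xi_j := \partial X_j / \partial z$ and $\eta_\a := \partial F^\a / \partial z$ are entire on $\C$. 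The conformality of $\Psi$ as a minimal immersion imposes the Weierstrass identity
\[
\xi_1^2 + \xi_2^2 + \sum_\a \eta_\a^2 = 0,
\]
and the graph hypothesis --- that $(X_1, X_2): \C \to \R^2$ is an orientation-preserving diffeomorphism --- translates (via the identity $J = |\omega_+|^2 - |\omega_-|^2$ for the Jacobian) into the strict pointwise inequality $|\omega_-| < |\omega_+|$ on $\C$, where $\omega_\pm := \xi_1 \pm i\, \xi_2$.

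The key step will be a Liouville-type argument. Any zero of $\omega_+$ is forced to be a zero of $\omega_-$ of at least the same order (otherwise the strict inequality above would fail in a neighbourhood), so $\mu := \omega_-/\omega_+$ extends to an entire function on $\C$ with $|\mu| < 1$ everywhere, and is therefore a constant $c_0$ with $|c_0| < 1$. Unraveling $\omega_- = c_0\, \omega_+$ gives $\xi_2 = c\, \xi_1$ for a complex constant $c = \a + i\be$, with $\be < 0$ by the orientation of the Jacobian. Integrating this identity on the holomorphic extensions $Z_j := X_j + i\, X_j^*$ (with $X_j^*$ a harmonic conjugate of $X_j$) yields $Z_2 = c\, Z_1 + \g$, and comparing real parts gives $X_2 = \a\, X_1 - \be\, X_1^* + \mathrm{Re}(\g)$. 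Solving for $X_1^*$ and transporting back to $M$, the harmonic conjugate of $x_1$ takes the affine form
\[
x_1^* = a\, x_1 + b\, x_2 + \text{const}, \qquad b = -1/\be > 0.
\]

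To conclude, I would set $u_1 := x_1$ and $u_2 := x_1^*$. Since $u_2$ is the harmonic conjugate of $u_1$ on $M$, the pair $(u_1, u_2)$ automatically satisfies $\lan du_1, du_2\ran_g = 0$ and $|du_1|_g = |du_2|_g$, hence is isothermal; and because $(x_1, x_2) \mapsto (u_1, u_2)$ is a linear bijection of $\R^2$ composed with the graph diffeomorphism $(x_1, x_2): M \to \R^2$, the map $(u_1, u_2): M \to \R^2$ is itself a global diffeomorphism. The main obstacle is the parabolicity step: an arbitrary complete simply connected minimal surface in $\R^N$ may be conformally hyperbolic, so one must use the graph structure --- equivalently, the proper harmonic function $x_1$ --- to rule out this case; once that is secured, the rest is a clean Weierstrass-plus-Liouville reduction.
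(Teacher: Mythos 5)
The paper gives no proof of this lemma at all---it is quoted from Osserman's survey (\cite{o}, \S 5)---so your proposal can only be compared with the classical argument given there, and in its second half it reproduces that argument exactly: given a global conformal parameter $z$ on $M$, the graph condition turns the Jacobian identity $J=|\omega_+|^2-|\omega_-|^2>0$ into the pointwise bound $|\omega_-|<|\omega_+|$, Liouville's theorem makes $\omega_-/\omega_+$ a constant of modulus less than one, and integration expresses the harmonic conjugate of $x_1$ as $a x_1+b x_2+\mathrm{const}$ with $b>0$; your sign bookkeeping ($\mathrm{Im}\,c<0$, hence $b=-1/\beta>0$) and the final verification that $(x_1,ax_1+bx_2)$ is a global chart are all correct. (In fact this part is simpler than you make it: the strict inequality $|\omega_-|<|\omega_+|$ at every point already forbids any zero of $\omega_+$, since at such a point it would read $|\omega_-|<0$; no removable-singularity discussion is needed. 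Also the Weierstrass relation $\xi_1^2+\xi_2^2+\sum_\a\eta_\a^2=0$, while true, is never used.)

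The gap is in the step you yourself call the main obstacle. Your proposed mechanism for parabolicity---that $x_1$ is a \emph{proper} harmonic function on $M$---is false: the preimage under $x_1$ of a compact interval $[-K,K]$ is the graph of $f$ over the unbounded slab $\{|x_1|\le K\}\subset\R^2$, which is non-compact, so $x_1$ is not proper (only the full projection $(x_1,x_2):M\to\R^2$ is). Consequently the principle ``a proper harmonic function rules out conformal hyperbolicity,'' correct as it is, simply does not apply here, and the parabolicity step must be carried entirely by the citation. That citation is legitimate and non-circular, because in \cite{o} parabolicity of entire minimal graphs is established independently of the present lemma by the Lewy--J\"orgens construction, which uses only the divergence form of the minimal surface system (hence works in every codimension): one produces a potential $\Phi$ with $\mathrm{Hess}\,\Phi=v^{-1}(g_{ij})$, and the expanding map $x\mapsto x+\n\Phi(x)$ is then a diffeomorphism of $\R^2$ onto $\R^2$ furnishing global isothermal parameters, albeit not of the linear form asserted by the lemma. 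If you delete the properness remarks and rest the first step on that construction (or quote it), your proof is complete and coincides with Osserman's: Lewy--J\"orgens map for parabolicity, then the Liouville reduction to obtain the linear isothermal parameters.
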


Equipped with this tool, we can obtain another Bernstein type theorem
for entire minimal graphs of dimension $2$.

\begin{thm}\label{t4}
Let $f:\R^2\ra \R^m$ $(x^1,x^2)\mapsto (f^1,\cdots,f^m)$ be an entire
solution of the minimal surface equations
\begin{equation}\label{mi}
\Big(1+\Big|\f{\p f}{\p x^2}\Big|^2\Big)\f{\p^2 f}{(\p x^1)^2}-2\Big\lan \pd{f}{x^1},\pd{f}{x^2}\Big\ran\f{\p^2 f}{\p x^1\p x^2}+\Big(1+\Big|\pd{f}{x^1}\Big|^2\Big)\f{\p^2 f}{(\p x^2)^2}=0.
\end{equation}
If for some $\ep>0$,
\begin{equation}
\De_f=\det\Big(\de_{ij}+\sum_\a \pd{f^\a}{x^i}\pd{f^\a}{x^j}\Big)^{\f{1}{2}}=O(R^{1-\ep})
\end{equation}
with $R=|x|$, then $f$ has to be affine linear.
\end{thm}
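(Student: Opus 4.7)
The plan is to use Lemma \ref{iso} to put $M$ into global isothermal parameters, reformulate the immersion as an entire holomorphic map $\phi:\C\to\C^{2+m}$ via the classical Weierstrass--Osserman representation, and then invoke the Liouville theorem for entire functions of polynomial growth. Both the hypothesis and the conclusion translate directly into statements about $\phi$, and no curvature estimate is needed in this 2-dimensional setting.

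By Lemma \ref{iso} there is an invertible affine change of variables $u=L(x)$ with $u_1=x^1$, $u_2=ax^1+bx^2$ ($b>0$), such that $(u_1,u_2)$ are global isothermal parameters for $M$. Writing the induced metric as $g=\la^2(du_1^2+du_2^2)$, set $F(u):=(x(u),f(x(u)))\in\R^{2+m}$. Minimality combined with conformality of the parameters makes $F$ harmonic, i.e.\ $F_{u_1u_1}+F_{u_2u_2}=0$. Letting $z=u_1+iu_2$ and $\phi:=F_z=\f{1}{2}(F_{u_1}-iF_{u_2})$, harmonicity yields that $\phi:\C\to\C^{2+m}$ is entire and holomorphic, conformality yields the null relation $\sum_i(\phi^i)^2=0$, and $|\phi|^2=\la^2/2$.

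Comparing the two expressions for the volume element of $M$ gives $\la^2\,du_1\,du_2=v\,dx^1\,dx^2$, while the Jacobian $\det(\pd{u}{x})=b$ forces $du_1\,du_2=b\,dx^1\,dx^2$, whence $v=b\la^2=2b|\phi|^2$. Since $L$ is a linear isomorphism of $\R^2$, we have $|u|\asymp|x|$, and thus the hypothesis $\De_f=v=O(|x|^{1-\ep})$ becomes $|\phi(z)|=O(|z|^{(1-\ep)/2})$ as $|z|\to\infty$. Each coordinate $\phi^i$ is then an entire holomorphic function whose polynomial growth has exponent $\f{1-\ep}{2}<1$, so Liouville's theorem forces every $\phi^i$ to be a constant. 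Consequently $F_z$ is constant, $F$ is affine linear in $(u_1,u_2)$, and pulling back through the affine map $u=L(x)$ shows that each $f^\a$ is affine linear in $x$.

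The only points requiring care are the standard translations between isothermal and holomorphic data---harmonicity $\Leftrightarrow$ $\phi$ holomorphic, conformality $\Leftrightarrow$ the null condition $\phi\cdot\phi=0$, $|\phi|^2=\la^2/2$---together with the identification $v=2b|\phi|^2$ and the bi-Lipschitz comparison $|u|\asymp|x|$ coming from the affine nature of $L$. Once these identifications are recorded, the conclusion is an immediate application of Liouville, and the improvement over the Chern--Osserman theorem (which corresponds to bounded $\phi$) is obtained automatically because the relevant growth exponent $\f{1-\ep}{2}$ is strictly less than $1$.
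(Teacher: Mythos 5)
Your proof is correct, and it takes a genuinely different route from the paper's. The paper also starts from Lemma \ref{iso}, but then stays entirely within its curvature-estimate framework: it bounds $|\n\eta|\leq(\la_1/\la_2)^{1/2}v^{-1/2}|D\eta|$ for a cutoff $\eta$ supported over a Euclidean disk, inserts this into the weighted $L^t$ estimate (\ref{es1}) (whose derivation rests on Simons' inequality (\ref{b4}), the Kato-type inequality (\ref{kato}) and $\De\log w\leq -|B|^2$ (\ref{dew})), and chooses $q=\f{3t-1}{2}$ and $t=\f{2}{\ep}$ so that the right-hand side is $O\big(R^{\f{2}{t}-2\ep}\big)\ra 0$, forcing $|B|\equiv 0$. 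You bypass all of that: after Lemma \ref{iso} you pass to the holomorphic null curve $\phi=F_z$, identify $v=b\la^2=2b|\phi|^2$ (consistent with the paper's $v=\la^2\la_1\la_2$, since $\la_1\la_2=b$ there), translate the growth hypothesis into $|\phi(z)|=O(|z|^{(1-\ep)/2})$, and finish with Liouville; your intermediate identifications (harmonicity of $F$, the null relation, $|\phi|^2=\la^2/2$, $|u|\asymp|x|$, and the fact that $\phi$ is defined on all of $\C$ because the linear change of variables is surjective) are standard and correct. As for what each approach buys: the paper's argument is the two-dimensional specialization of machinery valid in any dimension $n$ with $G$-rank $\le 2$ (the same estimate (\ref{es1}) underlies Theorems \ref{t1}, \ref{t2} and \ref{t3}), while your argument is shorter, needs no curvature estimates, is precisely the complex-analytic Chern--Osserman style approach the introduction alludes to, and in fact proves more: since an entire function with $|\phi(z)|=o(|z|)$ is already constant, your argument yields the conclusion under the weaker hypothesis $\De_f=o(R^2)$, which is essentially sharp, as the holomorphic graph of $f(z)=z^2$ (an entire, non-affine minimal graph in $\R^4$) has $\De_f=1+4|z|^2\sim 4R^2$ with $R=|x|$.
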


\begin{proof}
By Lemma \ref{iso}, one can find a global isothermal coordinate $(u_1,u_2)$
for the  entire minimal graph $M:=\{(x,f(x)):x\in \R^2\}$, i.e.
\begin{equation}\aligned
g&=\la^2\big((du^1)^2+(du^2)^2\big)=\la^2\big((dx^1)^2+(a\ dx^1+b\ dx^2)^2\big)\\
&=\la^2\big((1+a^2)(dx^1)^2+2ab\ dx^1 dx^2+b^2 (dx^2)^2\big).
\endaligned
\end{equation}
In other words, the metric is given by
\begin{equation}\label{metric2}
(g_{ij})=\la^2\left(\begin{array}{cc}
1+a^2 & ab\\
ab & b^2
\end{array}\right).
\end{equation}
Denote the two eigenvalues of $\left(\begin{array}{cc}
1+a^2 & ab\\
ab & b^2
\end{array}\right)$ by $\la_1^2\geq \la_2^2>0$, then
\begin{equation}\label{v1}
v=\det(g_{ij})^{\f{1}{2}}=\la^2\la_1\la_2.
\end{equation}

Since $M$ is a graph, any function $\varphi$ on $M$ can be regarded as a function on $\R^2$. Denote
\begin{equation}
\p_i \varphi=\pd{\varphi}{x^i},\qquad D\varphi=(\p_1 \varphi,\p_2 \varphi)
\end{equation}
and let $\n \varphi$ be the gradient vector of $\varphi$ on $M$ with respect to
$g$. Since the largest eigenvalue of $(g^{ij})$ equals the multiplicative inverse of the smallest eigenvalue of $(g_{ij})$, which is $\la^{-2}\la_2^{-2}$, we have
\begin{equation*}
|\n \varphi|^2=g^{ij}\p_i\varphi\p_j\varphi\leq \la^{-2}\la_2^{-2}|D\varphi|^2
\end{equation*}
i.e.
\begin{equation}
|\n \varphi|\leq \la^{-1}\la_2^{-1}|D\varphi|=\Big(\f{\la_1}{\la_2}\Big)^{\f{1}{2}}v^{-\f{1}{2}}|D\varphi|.
\end{equation}

Given $0<R_0<R$, let $\psi$ be a standard bump function, such that $\text{supp}\ \psi\subset [0,R)$, $\psi\equiv 1$ on $[0,R_0]$ and $|\psi'|\leq c_0(R-R_0)^{-1}$. Taking $\eta(x,f(x))=\psi(|x|)$ in (\ref{es1}) gives
\begin{equation}\aligned
&\Big(\int_{\Bbb{D}^2(R_0)}|B|^{2t}v^{2q+1}dx^1 dx^2\Big)^{\f{1}{t}}\leq
\Big(\int_M |B|^{2t}v^{2q}\eta^{2t}*1\Big)^{\f{1}{t}}\\
\leq&C_2\Big(\int_M v^{2q}|\n\eta|^{2t}*1\Big)^{\f{1}{t}}=C_2\Big(\int_M v^{2q}\Big(\f{\la_1}{\la_2}\Big)^t v^{-t}|D\eta|^{2t}*1\Big)^{\f{1}{t}}\\
\leq&C_6(R-R_0)^{-2}\Big(\int_{\Bbb{D}^2(R)}v^{2q-t+1}dx_1dx_2\Big)^{\f{1}{t}}\\
\leq& C_6(R-R_0)^{-2}(\max_{\Bbb{D}^2(R)} v)^{\f{2q+1}{t}-1}(\pi R^2)^{\f{1}{t}}\\
=&C_7\Big(1-\f{R_0}{R}\Big)^{-2}R^{-2+\f{2}{t}}(\max_{\Bbb{D}^2(R)} v)^{\f{2q+1}{t}-1}
\endaligned
\end{equation}
with $C_6$ and $C_7$ being positive constants depending only on $t,q,a$ and $b$.
Letting $q=\f{3t-1}{2}$ gives $\f{2q+1}{t}-1=2$. Thus the growth condition
of $v$ implies
\begin{equation}
\Big(\int_{\Bbb{D}^2(R_0)}|B|^{2t}v^{3t}dx^1 dx^2\Big)^{\f{1}{t}}\leq C_8\Big(1-\f{R_0}{R}\Big)^{-2}R^{\f{2}{t}-2\ep}.
\end{equation}
Taking $t=\f{2}{\ep}$ and then letting $R\ra +\infty$ force $|B|(x,f(x))=0$
whenever $|x|<R_0$. Finally by letting $R_0\ra +\infty$ we get the Bernstein type result.
\end{proof}

Given a vector-valued function $f:\R^2\ra \R^m$, denote by
$$Df=Df(x):=\Big(\pd{f^\a}{x^i}\Big)$$
the Jacobi matrix of $f$ at $x\in \R^2$. $Df$ can also be seen as a linear mapping
from $\R^2$ to $\R^m$. Obviously $Df (Df)^T$ is a nonnegative definite symmetric matrix,
whose engenvalues are denoted by $\mu_1^2\geq \mu_2^2\geq 0$. It is easy to check that
$\mu_1$ and $\mu_2$ are just the critical values of the function
$$v\in \R^2\backslash 0\mapsto \f{\big|(Df)(v)\big|}{|v|}$$
and for any bounded domain $\mc{D}\subset \R^2$,
$$\mu_1\mu_2=\f{\text{Area}\big(Df(\mc{D})\big)}{\text{Area}(\mc{D})}.$$
In matrix terminology, $\mu_1^2\mu_2^2$ equals the squared sum
of all the $2\times 2$-minors of $Df$, i.e.
\begin{equation}
\mu_1^2\mu_2^2=\sum_{\a<\be}\Big(\pd{f^\a}{x^1}\pd{f^\be}{x^2}-\pd{f^\a}{x^2}\pd{f^\be}{x^1}\Big)^2.
\end{equation}
When $m=2$, $\mu_1\mu_2$ then is the absolute value of  $J_f:=\det(Df)$.

As shown in (\ref{v-f}), the metric matrix of the graph given by $f$
is
\begin{equation}
(g_{ij})=I_2+Df (Df)^T.
\end{equation}
Thus the two eigenvalues of $(g_{ij})$ are $1+\mu_1^2$ and $1+\mu_2^2$, and
\begin{equation}
v^2=\det(g_{ij})=(1+\mu_1^2)(1+\mu_2^2).
\end{equation}

Now we additionally assume that $f$ is an entire solution of the minimal surface equations.
Then as shown in (\ref{metric2}), there exists a positive function $\la$ on $M$ and
two positive constants $\la_1,\la_2$, depending only on $a$ and $b$, such that
\begin{equation}
1+\mu_1^2=\la^2\la_1^2\qquad 1+\mu_2^2=\la^2\la_2^2.
\end{equation}
Hence
\begin{equation}
\aligned
\mu_1^2\mu_2^2&=(\la^2\la_1^2-1)(\la^2\la_2^2-1)=\la_1^2\la_2^2\la^4-(\la_1^2+\la_2^2)\la^2+1\\
&=v^2-\f{\la_1^2+\la_2^2}{\la_1\la_2}v+1.
\endaligned
\end{equation}
Note that $\f{\la_1^2+\la_2^2}{\la_1\la_2}$ is a constant. Once $v$ has polynomial growth,
 $\mu_1\mu_2$ also has polynomial growth of the same order, and vice versa.
Therefore one can obtain an equivalent form of Theorem \ref{t4} as follows.

\begin{thm}
Let $f:\R^2\ra \R^m$ $(x^1,x^2)\mapsto (f^1,\cdots,f^m)$ be an entire
solution of the minimal surface equations.
If for some $\ep>0$,
\begin{equation}\label{Ja1}
\sum_{\a<\be}\Big(\pd{f^\a}{x^1}\pd{f^\be}{x^2}-\pd{f^\a}{x^2}\pd{f^\be}{x^1}\Big)^2=O(R^{2(1-\ep)})
\end{equation}
with $R=|x|$, then $f$ has to be affine linear. If $m=2$, the condition (\ref{Ja1}) is equivalent to
\begin{equation}
|J_f|:=|\det(Df)|=O(R^{1-\ep}).
\end{equation}
\end{thm}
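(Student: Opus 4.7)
The plan is to derive this statement as a direct corollary of Theorem \ref{t4}, using exactly the chain of equivalences between the growth of $\Delta_f=v$ and the growth of $\mu_1\mu_2$ that was set up in the paragraph immediately preceding the theorem. The key input is the identity
\[
\mu_1^2\mu_2^2 \;=\; v^2 \;-\; \frac{\lambda_1^2+\lambda_2^2}{\lambda_1\lambda_2}\,v \;+\; 1,
\]
obtained from writing the two eigenvalues of $(g_{ij})=I_2+Df(Df)^T$ as $1+\mu_1^2=\lambda^2\lambda_1^2$ and $1+\mu_2^2=\lambda^2\lambda_2^2$ via Lemma \ref{iso}. Here $\lambda_1,\lambda_2$ are absolute constants depending only on the linear change of coordinates $(a,b)$, so $c:=\tfrac{\lambda_1^2+\lambda_2^2}{\lambda_1\lambda_2}$ is a fixed positive constant.

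The first step is to recognize that the sum on the left of \eqref{Ja1} is exactly $\mu_1^2\mu_2^2$, as noted above in the paper. So the hypothesis reads $\mu_1\mu_2=O(R^{1-\ep})$. Second, I invert the quadratic identity to bound $v$: since $v\ge 1$, the identity gives $v^2\le \mu_1^2\mu_2^2+c\,v$, and by the elementary argument that either $v\le 2c$ or $v^2\le 2\mu_1^2\mu_2^2$, we conclude $v=\Delta_f=O(R^{1-\ep})$. One subtlety: here $R$ refers to the Euclidean norm $|x|$ of the base point, which matches the convention of Theorem \ref{t4}, so no conversion is needed. Third, I invoke Theorem \ref{t4} directly to conclude that $f$ is affine linear.

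For the second assertion, when $m=2$ only a single index pair $(\alpha,\beta)=(1,2)$ contributes, so
\[
\sum_{\alpha<\beta}\Big(\pd{f^\alpha}{x^1}\pd{f^\beta}{x^2}-\pd{f^\alpha}{x^2}\pd{f^\beta}{x^1}\Big)^2 \;=\; \big(\det Df\big)^2 \;=\; J_f^2,
\]
so \eqref{Ja1} becomes $J_f^2=O(R^{2(1-\ep)})$, which is patently equivalent to $|J_f|=O(R^{1-\ep})$.

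I do not expect a real obstacle here: every ingredient, including the isothermal representation, the explicit formula relating $v$ and $\mu_1\mu_2$, and Theorem \ref{t4} itself, is already in place. The only mild point to be careful about is that the quadratic relation gives an \emph{equivalence} of polynomial-growth orders only because $c$ is a fixed constant; this is why the restriction of being an entire minimal graph (so that Lemma \ref{iso} applies) is essential. The proof therefore amounts to packaging these observations into two short paragraphs.
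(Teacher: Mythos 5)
Your proof is correct and takes essentially the same route as the paper: the paper also obtains this theorem as a direct corollary of Theorem \ref{t4}, using the isothermal parametrization to write $1+\mu_i^2=\lambda^2\lambda_i^2$ and hence $\mu_1^2\mu_2^2=v^2-\frac{\lambda_1^2+\lambda_2^2}{\lambda_1\lambda_2}v+1$ with constant coefficient, so that $v=\Delta_f$ and $\mu_1\mu_2$ have the same polynomial growth order, and the $m=2$ case is the same trivial single-minor observation. Your explicit inversion step (either $v\le 2c$ or $v^2\le 2\mu_1^2\mu_2^2$, using $v\ge 1$) just fills in a detail the paper asserts without writing out.
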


Similarly we have a version of  Theorem \ref{t3} for the minimal surface case.

\begin{thm}
Let $f:\R^2\ra \R^m$ $(x^1,x^2)\mapsto (f^1,\cdots,f^m)$ be an entire
solution of the minimal surface equations.
If
\begin{equation}\label{Ja2}
\sum_{\a<\be}\Big(\pd{f^\a}{x^1}\pd{f^\be}{x^2}-\pd{f^\a}{x^2}\pd{f^\be}{x^1}\Big)^2=o(R^{\f{4}{3}})
\end{equation}
with $R^2=|x|^2+|f(x)|^2$, then $f$ has to be affine linear. If $m=2$, the condition (\ref{Ja2}) is equivalent to
\begin{equation}
|J_f|:=|\det(Df)|=o(R^{\f{2}{3}}).
\end{equation}
\end{thm}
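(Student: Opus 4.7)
The plan is to reduce this theorem directly to Theorem \ref{t3} by translating the condition (\ref{Ja2}) on the $2\times 2$ minors of $Df$ into the growth condition on $v=\Delta_f$ needed by that theorem. All the ingredients have already been assembled in the discussion preceding the statement: the identity
\[
\mu_1^2\mu_2^2=\sum_{\a<\be}\Big(\pd{f^\a}{x^1}\pd{f^\be}{x^2}-\pd{f^\a}{x^2}\pd{f^\be}{x^1}\Big)^2
\]
expressing the product of the singular values of $Df$ as the sum of squared minors, and the quadratic relation
\[
\mu_1^2\mu_2^2=v^2-\f{\la_1^2+\la_2^2}{\la_1\la_2}\,v+1
\]
which was derived using the global isothermal chart from Lemma \ref{iso}. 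Here the coefficient $C:=\f{\la_1^2+\la_2^2}{\la_1\la_2}$ is a constant depending only on the parameters $a,b$ of the linear change of coordinates.

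First, I would rewrite (\ref{Ja2}) as $\mu_1\mu_2=o(R^{2/3})$, noting that $R^2=|x|^2+|f(x)|^2$ is exactly the squared extrinsic distance used in Theorem \ref{t3}. Next, I would solve the quadratic relation for $v$: since $v\geq 1>0$, the relevant root is
\[
v=\f{1}{2}\Big(C+\sqrt{C^2-4+4\mu_1^2\mu_2^2}\Big),
\]
which is real because $C^2-4=(\la_1^2-\la_2^2)^2/(\la_1\la_2)^2\geq 0$. In particular $v=\mu_1\mu_2+O(1)$ as $\mu_1\mu_2\to\infty$, so the hypothesis $\mu_1\mu_2=o(R^{2/3})$ forces $\Delta_f=v=o(R^{2/3})$. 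Since $M$ is a $2$-dimensional minimal graph, the Gauss map has rank at most $2$ automatically, so Theorem \ref{t3} applies and yields that $f$ is affine linear.

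For the $m=2$ refinement, there is a single $2\times 2$ minor, namely $J_f=\det(Df)$, so $\sum_{\a<\be}\big(\partial_1 f^\a\partial_2 f^\be-\partial_2 f^\a\partial_1 f^\be\big)^2=J_f^2=\mu_1^2\mu_2^2$. Thus (\ref{Ja2}) reads $|J_f|=o(R^{2/3})$, proving the equivalence.

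No step presents a genuine obstacle: the substantive work was done in establishing the quadratic identity between $v$ and $\mu_1\mu_2$, which crucially relied on the existence of global isothermal parameters for $2$-dimensional entire minimal graphs (Lemma \ref{iso}). The mildest care needed is in verifying that $v=\mu_1\mu_2+O(1)$ at infinity, so that polynomial growth of order $2/3$ transfers cleanly from $\mu_1\mu_2$ to $v$; but this is immediate from the explicit quadratic formula.
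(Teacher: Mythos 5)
Your proposal is correct and is essentially the paper's own (implicit) argument: the paper states this theorem without a separate proof precisely because the quadratic identity $\mu_1^2\mu_2^2=v^2-\f{\la_1^2+\la_2^2}{\la_1\la_2}\,v+1$ (valid by Lemma \ref{iso}) shows that $\mu_1\mu_2$ and $v=\De_f$ have the same growth order, after which Theorem \ref{t3} applies, the $G$-rank condition being automatic for a $2$-dimensional graph. The one imprecision is your claim that $v\geq 1$ forces $v$ to be the plus root --- it does not (when $\la_1\neq \la_2$ both roots can be positive, and $v$ coincides with the minus root wherever $2v\leq \f{\la_1^2+\la_2^2}{\la_1\la_2}$) --- but this is harmless: $v$ is in any case a root of the quadratic, and both roots are bounded above by $\f{\la_1^2+\la_2^2}{\la_1\la_2}+\mu_1\mu_2$, which is all that is needed to transfer the $o(R^{2/3})$ bound from $\mu_1\mu_2$ to $v$.
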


\begin{rem}
Obviously, the above result is also a generalization of that of  \cite{h-s-v1}.
\end{rem}

\Section{Discussions}{Discussions}

We wish to discuss the case of  a minimal surface $M$ in
$\R^{2+m}$. It is natural to ask under which conditions
the equality in  (\ref{b4}), (\ref{dew}) or (\ref{kato}) holds.

With the aid of Lemma \ref{iso}, one can get a sufficient condition for equality in (\ref{dew}).

\begin{pro}
If $M$ is a $2$-dimensional entire minimal graph in $\R^{2+m}$, then
\begin{equation}\label{dew1}
\De \log w=-|B|^2.
\end{equation}
\end{pro}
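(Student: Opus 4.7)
The plan is to exploit the fact that a $2$-dimensional minimal surface is conformally parameterizable and thereby translate the identity $\De\log w=-|B|^2$ into a purely intrinsic statement relating the conformal factor and the Gauss curvature. By Lemma~\ref{iso}, the entire minimal graph $M$ admits global isothermal coordinates $(u_1,u_2)$, so that $g=\la^2\bigl((du_1)^2+(du_2)^2\bigr)$ for some positive function $\la$, with intrinsic Laplacian $\De=\la^{-2}(\partial_{u_1}^2+\partial_{u_2}^2)$.

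First I would express $w$ in terms of $\la$. Using the affine change $u_1=x^1$, $u_2=ax^1+bx^2$ from Lemma~\ref{iso}, the embedding has the form $X(u_1,u_2)=\bigl(u_1,(u_2-au_1)/b,f(u_1,u_2)\bigr)$. Since $A=\ep_1\w\ep_2$ represents the $x^1x^2$-plane, the value of $w$ at a point is the determinant of the projection of the orthonormal tangent frame $\{\la^{-1}\partial_{u_i}X\}$ onto that plane; a short $2\times 2$ determinant computation gives $w=1/(b\la^2)$. Hence $\log w=-2\log\la-\log b$ and $\De\log w=-2\De\log\la$.

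Next I would identify this with $-|B|^2$ through the Gauss curvature. The standard formula for a conformal metric yields $K=-\De\log\la$, so $\De\log w=2K$. On the other hand, the Gauss equation for a $2$-dimensional submanifold of Euclidean space reads $K=\lan B_{e_1e_1},B_{e_2e_2}\ran-|B_{e_1e_2}|^2$, and minimality forces $B_{e_2e_2}=-B_{e_1e_1}$, so that
\begin{equation*}
K=-|B_{e_1e_1}|^2-|B_{e_1e_2}|^2=-\mu_1^2-\mu_2^2=-\tfrac{1}{2}|B|^2,
\end{equation*}
where the last step uses the normal form (\ref{sf}) and $|B|^2=2\mu_1^2+2\mu_2^2$. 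Combining the two identities gives $\De\log w=2K=-|B|^2$.

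The only delicate point I anticipate is the bookkeeping of sign and scale conventions: the sign of $K$ in the conformal-Laplacian formula $K=-\De\log\la$ and the factor $1/b$ arising from Lemma~\ref{iso}. Once these are pinned down, the proof is really a two-line consequence of conformal surface geometry, and it makes transparent why the nonnegative correction term in the general $G$-rank$\le 2$ formula derived in \S 3 must vanish identically in the $n=2$ case.
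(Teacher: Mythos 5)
Your proposal is correct and follows essentially the same route as the paper: global isothermal coordinates from Lemma~\ref{iso}, the identification of $w$ with a constant multiple of $\la^{-2}$ (your $w=1/(b\la^2)$ agrees with the paper's $v=\la^2\la_1\la_2$ since $\la_1\la_2=b$), the conformal formula $K=-\f{1}{2}\De\log(\la^2)$, and the Gauss equation with minimality giving $K=-\f{1}{2}|B|^2$. The sign and scale conventions you flag as delicate are exactly the ones the paper uses, so no adjustment is needed.
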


\begin{proof}
By Lemma \ref{iso}, there exists a nonsingular linear transformation
\begin{equation}\label{iso2}
 \left(\begin{array}{c} u^1 \\ u^2\end{array}\right)=
\left(\begin{array}{cc} 1 & 0\\ a & b\end{array}\right)
\left(\begin{array}{c} x^1 \\ x^2\end{array}\right)
\end{equation}
such that $(u^1,u^2)$ are global isothermal parameters for $M$, where
$a$ and $b>0$ are constants. Hence there is a positive function $\la$ on $M$,
such that the metric $g$ on $M$ can be expressed as
\begin{equation}g=\la^2\big((du^1)^2+(du^2)^2\big).\end{equation}
As shown in (\ref{v1}),
$$w^{-1}=v=\la^2\la_1\la_2$$
with $\la_1^2\geq \la_2^2>0$ being eigenvalues of $\left(\begin{array}{cc}
1+a^2 & ab\\
ab & b^2
\end{array}\right)$. Thus
\begin{equation*}
\log w=-\log (\la^2)-\log(\la_1\la_2)
\end{equation*}
and moreover
\begin{equation}\label{dew2}
\De \log w=-\De \log (\la^2).
\end{equation}

The Gauss curvature $K$ of $M$ is given by (see e.g. \cite{j})
\begin{equation}\label{dew3}
 K=-\f{1}{2}\De \log(\la^2).
\end{equation}

On the other hand, let $\{e_1,e_2\}$ be an orthonormal basis of $T_p M$, with $p$ an arbitrary point in $M$. Since $M$ is minimal,
$B_{e_1e_1}+B_{e_2e_2}=0$ and the Gauss equation yields
\begin{equation}\label{dew4}
K=\mathrm{det} B_{e_ie_j}=-\f{1}{2}|B|^2.
\end{equation}
Finally combining (\ref{dew2}), (\ref{dew3}), (\ref{dew4}) yields (\ref{dew1}).
\end{proof}

Let $M$ be a Riemann surface and $F=(F^1,\cdots,F^{n+m}):M\ra \R^{2+m}$ be an isomorphic immersion. Every $p\in M$ has
a coordinate neighborhood $(U;u,v)$ such that $g=\la^2(du^2+dv^2)$ on $U$. Now we introduce the complex coordinate
\begin{equation*}
 w=u+\sqrt{-1}v.
\end{equation*}
It is well-known that $F$ is minimal if and
only if all components
of $F$ are harmonic functions on $M$, i.e. $\pd{F}{w}$ is a
vector-valued holomorphic function on $U$;
here and in the sequel
\begin{equation}\label{hol}\aligned
\f{\p}{\p w}=\f{1}{2}\Big(\f{\p}{\p u}-\sqrt{-1}\f{\p}{\p v}\Big),\qquad &\pf{\bar{w}}=\f{1}{2}\Big(\pf{u}+\sqrt{-1}\pf{v}\Big),\\
dw=du+\sqrt{-1}dv,\qquad &d\bar{w}=du-\sqrt{-1}dv.
\endaligned
\end{equation}
While $\pd{F}{w}$ depends on the choice of local coordinate, the vector-valued holomorphic $1$-form
$$\p F:=\pd{F}{w}dw$$
is independent  of these local coordinates and can be well-defined on the whole surface, where $dw=du+\sqrt{-1}dv$. Similarly we can define
$\bar{\partial} F:=\pd{F}{\bar{w}}d\bar{w}.$

With the symmetric bi-linear form
$$\lan (a_1,\cdots,a_N),(b_1,\cdots,b_N)\ran=\sum_{i=1}^N a_i b_i,$$
since $(u,v)$ are isothermal parameters, it is well known
and easy to check that
\begin{equation}\label{w1}
 \Big\lan \pd{F}{w},\pd{F}{w}\Big\ran=0,\qquad \Big\lan \pd{F}{w},\pd{F}{\bar{w}}\Big\ran>0
\end{equation}
which is equivalent to
\begin{equation}
 \lan \p F,\p F\ran=0,\qquad \lan \p F,\bar{\partial} F\ran>0.
\end{equation}

 Similarly, one can define
\begin{equation}
 \p^2 F:=\ppd{F}{w}dw^2,\qquad \bar{\partial}^2 F:=\ppd{F}{\bar{w}}d\bar{w}^2.
\end{equation}
Then the minimality of $F$ implies that $\p^2 F$ is a vector-valued holomorphic $2$-form.

\begin{pro}\label{simons2}
 For a fixed point $p$ in a  minimal surface $M\subset \R^{2+m}$, the following statements are equivalent:

(a) $\De |B|^2=|\n B|^2-3|B|^4$ at $p$;

(b) $p$ is a G-conformal point;

(c) $\big\lan B_{\pf{w}\pf{w}},B_{\pf{w}\pf{w}}\big\ran=0$ at $p$, where $w$ is a local complex coordinate near $p$;

(d) $\lan \p^2 F,\p^2 F\ran=0$ at $p$.

\end{pro}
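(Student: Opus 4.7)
The plan is to establish the equivalences via the cyclic implication (a) $\Leftrightarrow$ (b) $\Leftrightarrow$ (c) $\Leftrightarrow$ (d). Since $M$ is a surface, its $G$-rank is automatically at most $2$, so Proposition \ref{simons} applies; it states that the Simons-type inequality $\De|B|^2\geq 2|\n B|^2-3|B|^4$ becomes equality at $p$ precisely when $p$ is a G-conformal point. This disposes of (a) $\Leftrightarrow$ (b) immediately, modulo the typographical coefficient of $|\n B|^2$ in the statement.

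For (b) $\Leftrightarrow$ (c), the plan is to translate the G-conformal condition into an isotropy condition using a local isothermal chart $w=u+\sqrt{-1}\,v$ near $p$. Setting $\td e_1:=\la^{-1}\p_u F$ and $\td e_2:=\la^{-1}\p_v F$, one obtains an orthonormal tangent frame, and the minimality of $M$ gives $B_{\td e_2\td e_2}=-B_{\td e_1\td e_1}$. Extending $B$ complex-bilinearly yields
\begin{equation*}
B_{\p_w,\p_w}=\f{\la^2}{2}\bigl(B_{\td e_1\td e_1}-\sqrt{-1}\,B_{\td e_1\td e_2}\bigr),
\end{equation*}
and hence
\begin{equation*}
\lan B_{\p_w,\p_w},B_{\p_w,\p_w}\ran=\f{\la^4}{4}\Bigl(|B_{\td e_1\td e_1}|^2-|B_{\td e_1\td e_2}|^2-2\sqrt{-1}\lan B_{\td e_1\td e_1},B_{\td e_1\td e_2}\ran\Bigr).
\end{equation*}
The real and imaginary parts of this complex quantity vanish simultaneously if and only if $|B_{\td e_1\td e_1}|=|B_{\td e_1\td e_2}|$ and $\lan B_{\td e_1\td e_1},B_{\td e_1\td e_2}\ran=0$, i.e. exactly the G-conformal conditions of Definition \ref{G-conformall} specialized to $n=2$.

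For (c) $\Leftrightarrow$ (d), the idea is to separate tangential and normal parts of $\p_w\p_w F$. Using the Christoffel symbols of the conformal metric $g=\la^2|dw|^2$ (with $\phi:=\log\la^2$), a direct calculation produces
\begin{equation*}
\p_w\p_w F=\phi_w\,\p_w F+B_{\p_w,\p_w},
\end{equation*}
the first term being complex tangential and the second complex normal. Since $\lan \p_w F,\p_w F\ran=0$ by the isothermal condition (\ref{w1}) and $\lan \p_w F,B_{\p_w,\p_w}\ran=0$ by tangent-normal orthogonality, cross terms drop out and
\begin{equation*}
\lan\p_w\p_w F,\p_w\p_w F\ran=\lan B_{\p_w,\p_w},B_{\p_w,\p_w}\ran.
\end{equation*}
Because $\p^2F=\p_w\p_w F\,dw^2$, the vanishing conditions (c) and (d) coincide at $p$.

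The main technical step is the bookkeeping for the complex-bilinear extension of $B$ and the explicit isolation of the tangential part of $\p_w\p_w F$ via Christoffel symbols; beyond these routine computations there is no substantive obstacle. I note also that both sides of (c) and (d) are invariant under the choice of local isothermal parameter, since a holomorphic change $w\mapsto w'$ rescales $B_{\p_w,\p_w}$ and $\p_w\p_w F$ by a common nonzero factor $(dw/dw')^2$, so the characterizations are intrinsic.
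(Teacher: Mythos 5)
Your proof is correct and takes essentially the same approach as the paper: (a) $\Leftrightarrow$ (b) by invoking Proposition \ref{simons}, (b) $\Leftrightarrow$ (c) by expanding $\lan B_{ww},B_{ww}\ran$ in an isothermal frame, and (c) $\Leftrightarrow$ (d) by splitting $\partial_w\partial_w F$ into tangential and normal parts so that the isotropy relation $\lan \partial_w F,\partial_w F\ran=0$ kills the cross terms. The only cosmetic difference is that you identify the tangential part as $(\partial_w\log\la^2)\,\partial_w F$ via the Christoffel symbols of the conformal metric, whereas the paper writes it as $\mu_1\partial_w F+\mu_2\partial_{\bar w}F$ and obtains $\mu_2=0$ by differentiating the isotropy relation; the two computations are interchangeable.
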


\begin{proof}

The equivalence of (a) and (b) has been proved in Proposition \ref{simons}.

Since $(u,v)$ is an isothermal coordinate, $\pf{u}$ and $\pf{v}$ have
the same length and are orthogonal to each other, hence
$p$ is an holomorphic-like point if and only if
\begin{equation}
|B_{uu}|=|B_{uv}|,\qquad \lan B_{uu},B_{uv}\ran=0.
\end{equation}
Here and in the sequel, $B_{uu}:=B_{\pf{u}\pf{u}}$, $B_{uv}:=B_{\pf{u}\pf{v}}$ and so on.

By using (\ref{hol}) one can get
\begin{equation}\label{b11}
 B_{ww}=\f{1}{2}B_{uu}-\f{\sqrt{-1}}{2}B_{uv}.
\end{equation}
It implies
\begin{equation}\aligned
 \lan B_{ww},B_{ww}\ran=\f{1}{4}\big(|B_{uu}|^2-|B_{uv}|^2\big)-\f{\sqrt{-1}}{2}\lan B_{uu},B_{uv}\ran
\endaligned
\end{equation}
and hence (b) and (c) are equivalent.

Since
$$(T_p M)\otimes \Bbb{C}=\text{span}\Big\{\pd{F}{w},\pd{F}{\bar{w}}\Big\}$$
there exist two complex numbers $\mu_1$ and $\mu_2$, such that
\begin{equation}\label{w2}
\n_{\pf{w}}\pf{w}=\Big(\ppd{F}{w}\Big)^T=\mu_1 \pd{F}{w}+\mu_2\pd{F}{\bar{w}}.
\end{equation}
By (\ref{w1}),
\begin{equation}\aligned
 0&=\f{1}{2}\pf{w}\Big\lan \pd{F}{w},\pd{F}{w}\Big\ran=\Big\lan \ppd{F}{w},\pd{F}{w}\Big\ran\\
&=\Big\lan \mu_1\pd{F}{w}+\mu_2\pd{F}{\bar{w}},\pd{F}{w}\Big\ran=\mu_2\Big\lan \pd{F}{\bar{w}},\pd{F}{w}\Big\ran.
\endaligned
\end{equation}
Hence $\mu_2=0$ and moreover
\begin{equation}
\aligned
\Big\lan \ppd{F}{w},\ppd{F}{w}\Big\ran&=\Big\lan \Big(\ppd{F}{w}\Big)^N,\Big(\ppd{F}{w}\Big)^N\Big\ran+\Big\lan \Big(\ppd{F}{w}\Big)^T,\Big(\ppd{F}{w}\Big)^T\Big\ran\\
&=\lan B_{ww},B_{ww}\ran+\mu_1^2\Big\lan \pd{F}{w},\pd{F}{w}\Big\ran=\lan B_{ww},B_{ww}\ran.
\endaligned
\end{equation}
Thus (c) is equivalent to (d).

\end{proof}

Define
\begin{equation}
 \om:=\lan \p^2 F,\p^2 F\ran=\Big\lan \ppd{F}{w},\ppd{F}{w}\Big\ran dw^4
\end{equation}
then it is easy to check that the definition of $\om$ is independent of the choice of coordinate, and
$$\pf{\bar{w}}\Big\lan \ppd{F}{w},\ppd{F}{w}\Big\ran=2\Big\lan \pf{w}\Big(\f{\p^2 F}{\p w\p \bar{w}}\Big),\ppd{F}{w}\Big\ran=0$$
implies $\om$ is a homolomorphic $4$-form on $M$. By using Proposition \ref{simons2} we immediately get the following corollary.

\begin{cor}
 Let $M$ be a minimal surface in $\ir{2+m}$, then $M$ is totally G-conformal  if and only if the holomorphic $4$-form
$\om:=\lan \p^2 F,\p^2 F\ran$ vanishes everywhere.
\end{cor}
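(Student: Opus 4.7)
The plan is to reduce the statement directly to the pointwise equivalence already established in Proposition \ref{simons2}. By Definition \ref{G-conformall}, $M$ is totally G-conformal precisely when every $p\in M$ is a G-conformal point, so the claim becomes a purely pointwise assertion: $p$ is G-conformal for every $p$ if and only if the $4$-form $\om=\lan \p^2 F,\p^2 F\ran$ vanishes at every $p$.

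First I would recall, from the discussion preceding the corollary, that $\om$ is a globally well-defined holomorphic $4$-form on $M$; its coefficient in any local complex coordinate $w$ is $\lan \p^2 F/\p w^2,\p^2 F/\p w^2\ran$. In particular, $\om$ vanishes identically on $M$ if and only if this scalar function vanishes at every point, in every (equivalently, some) local coordinate chart around that point.

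Next I would invoke the equivalence $(b)\Longleftrightarrow(d)$ of Proposition \ref{simons2}: at each $p\in M$, the point $p$ is G-conformal exactly when $\lan \p^2 F,\p^2 F\ran$ vanishes at $p$. Quantifying this equivalence over all $p\in M$ gives: every point of $M$ is G-conformal $\Longleftrightarrow$ $\lan \p^2 F,\p^2 F\ran\equiv 0$ on $M$ $\Longleftrightarrow$ $\om\equiv 0$ on $M$. Combining this with the definition of total G-conformality completes the argument.

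The main obstacle has already been handled in Proposition \ref{simons2}, where the pointwise equivalences $(a)\Leftrightarrow(b)\Leftrightarrow(c)\Leftrightarrow(d)$ are established by translating the G-conformal condition on $\{B_{uu},B_{uv}\}$ into a vanishing condition on $\lan B_{ww},B_{ww}\ran$ via (\ref{b11}), and then removing the tangential component of $\p^2 F/\p w^2$ using (\ref{w1})--(\ref{w2}). Once this is in hand, the corollary is nothing more than the global, quantified version of $(b)\Leftrightarrow(d)$, and no further computation is required.
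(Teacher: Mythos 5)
Your proposal is correct and takes essentially the same route as the paper: the paper derives this corollary immediately from Proposition \ref{simons2}, exactly by quantifying the pointwise equivalence (b) $\Leftrightarrow$ (d) over all of $M$, with the coordinate-independence of $\om$ justifying the passage from local coefficients to the global $4$-form. No gap; nothing further is needed.
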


\begin{cor}
 Let $M=\{(x,f(x)):x\in\R^2\}$ be an entire minimal graph in $\R^4$. Then $M$ is totally G-conformal if and only if at least one of the following
3 cases occurs: (i) $f:\R^2\ra \R^2$ is a holomorphic function; (ii) $f$ is anti-holomorphic; (iii) $f$ is affine linear.
\end{cor}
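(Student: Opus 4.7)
By the preceding corollary, $M$ is totally G-conformal iff the holomorphic $4$-form $\omega=\langle\partial^2 F,\partial^2 F\rangle$ vanishes identically on $M$. The plan is to compute $\omega$ in a global isothermal coordinate provided by Lemma \ref{iso}, factor it over $\C$, and invoke the identity theorem.

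By Lemma \ref{iso}, there exist $a\in\R$ and $b>0$ such that $u^1=x^1$, $u^2=ax^1+bx^2$ are global isothermal parameters for $M$; set $w=u^1+iu^2$. The crucial simplification in the graphic case is that the first two components of $F$ are linear in $(u^1,u^2)$, so $\partial^2 F^i/\partial w^2\equiv 0$ for $i=1,2$. Setting $g^\alpha:=\partial^2 f^\alpha/\partial w^2$, which are entire holomorphic functions of $w$ by minimality, I obtain
\begin{equation*}
\omega/dw^4 \;=\; (g^1)^2+(g^2)^2 \;=\; (g^1+ig^2)(g^1-ig^2).
\end{equation*}

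Assume $\omega\equiv 0$. Since both factors are entire, the identity theorem forces one of them to vanish identically on $\C$; WLOG $g^1+ig^2\equiv 0$, so that $G^3+iG^4:=\partial(f^1+if^2)/\partial w\equiv c$ for some complex constant $c$. The isothermal relation $\sum_i(\partial F^i/\partial w)^2\equiv 0$, together with the easily computed constants $G^1=1/2$ and $G^2=-(i+a)/(2b)$, yields
\begin{equation*}
(G^1)^2+(G^2)^2 + c\,(G^3-iG^4)\equiv 0.
\end{equation*}
If $c\neq 0$, this forces $G^3-iG^4$ to be a constant as well, so both $\partial f^\alpha/\partial w$ are constants and $f$ is affine linear (case (iii)). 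If $c=0$, then $(G^1)^2+(G^2)^2=0$, which is a one-line check reducing to $a=0$ and $b=1$; hence $(x^1,x^2)$ are already isothermal, $w=z:=x^1+ix^2$, and the relation $\partial(f^1+if^2)/\partial z\equiv 0$ says $f$ is anti-holomorphic (case (ii)). The symmetric case $g^1-ig^2\equiv 0$ yields (iii) or case (i), $f$ holomorphic.

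The converse is immediate once the complex structures are pinned down. For (iii) we have $\partial^2 F/\partial w^2\equiv 0$; for (i), holomorphicity of $f^1+if^2$ in $z$ makes $(x^1,x^2)$ isothermal (so $w=z$) and gives $g^1-ig^2=\partial^2(f^1-if^2)/\partial z^2\equiv 0$; case (ii) is parallel. The main obstacle is precisely this matching of complex structures: the statement requires ``holomorphic'' in the standard $z=x^1+ix^2$ sense, while the natural coordinate from Lemma \ref{iso} is $w$. The rigidity built into the algebraic constraint $(G^1)^2+(G^2)^2=0$ is what forces $a=0$, $b=1$, thereby aligning $w$ with $z$ exactly when non-affine solutions occur; getting this matching right is the only non-mechanical step.
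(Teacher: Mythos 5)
Your proof is correct, and it takes a genuinely different route from the paper's. Both arguments start the same way --- reduce to the vanishing of the holomorphic $4$-form $\omega=\langle \partial^2F,\partial^2F\rangle$ via the preceding corollary, pass to the global isothermal coordinate of Lemma \ref{iso}, and use that the first two components of $\partial F$ are constant --- but the key step diverges. The paper works at the level of \emph{first} derivatives: setting $\phi_i=\partial F^i/\partial z$ (its $z$ is your $w$) and $d=\phi_1^2+\phi_2^2$, it splits on the constant $d$. For $d=0$ it gets $\phi_4=\pm\sqrt{-1}\,\phi_3$, hence $\omega\equiv 0$, and it \emph{cites} \cite{h-s-v} for the equivalence of $d=0$ with $f$ holomorphic or anti-holomorphic; for $d\neq 0$ it writes the non-vanishing entire function $\phi_3-\sqrt{-1}\phi_4$ as $e^{H}$, computes $\omega=d(H')^2\,dz^4$, and concludes that $\omega\equiv 0$ forces $H$, hence all $\phi_i$, to be constant, i.e.\ $f$ affine. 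You instead factor the form itself, $\omega/dw^4=(g^1+ig^2)(g^1-ig^2)$ with $g^\alpha$ the \emph{second} derivatives, use that entire functions form an integral domain to kill one factor, integrate once so that $\partial(f^1\pm if^2)/\partial w$ becomes a constant $c$, and let the conformality relation $\sum_i(\partial F^i/\partial w)^2=0$ finish the job: $c\neq 0$ forces $f$ affine, while $c=0$ forces $(\partial x^1/\partial w)^2+(\partial x^2/\partial w)^2=0$, i.e.\ $a=0$, $b=1$, so that $w$ coincides with $z=x^1+ix^2$ and $f$ is anti-holomorphic (holomorphic in the mirror case). Your version is more elementary and self-contained: it needs no exponential/logarithm trick and no appeal to \cite{h-s-v} --- your $c=0$ computation re-proves exactly the direction of that result which the paper uses --- and it makes transparent why holomorphicity comes out relative to the standard coordinate $z$. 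What the paper's route buys is the explicit identity $\omega=d(H')^2\,dz^4$ for $d\neq 0$, a finer, quantitative description of the obstruction than a pure vanishing statement. The only soft spot in your write-up is the parenthetical ``so $w=z$'' in the converse for case (i): justify it either by noting that an orientation-compatible isothermal coordinate of the triangular form in Lemma \ref{iso} over an isothermal $(x^1,x^2)$ must have $a=0$, $b=1$, or, more simply, bypass it by computing $\omega$ directly in the isothermal coordinate $z$ and invoking the coordinate-independence of $\omega$; this is cosmetic, not a gap.
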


\begin{proof}
Let $(u_1,u_2)$ be the global isothermal parameters on $M$ given in (\ref{iso2}). Denote $z:=u_1+\sqrt{-1}u_2$ and
\begin{equation}
 \phi_i=\pd{x^i}{z},\qquad \phi_{2+\a}=\pd{f^\a}{z}.
\end{equation}
then $\pd{F}{z}=(\phi_1,\phi_2,\phi_3,\phi_4)$ and (\ref{w1}) yields $\phi_1^2+\phi_2^2+\phi_3^2+\phi_4^2=0$. By (\ref{iso2}), $\phi_1$ and $\phi_2$ are both constants, denote
\begin{equation}
 d:=\phi_1^2+\phi_2^2,
\end{equation}
then
\begin{equation}
 \phi_3^2+\phi_4^2=-(\phi_1^2+\phi_2^2)=-d.
\end{equation}

If $d=0$, then $\phi_4=\pm \sqrt{-1}\phi_3$ and hence
\begin{equation}
 \ppd{F}{z}=(\phi'_1,\phi'_2,\phi'_3,\phi'_4)=(0,0,\phi'_3,\pm \sqrt{-1}\phi'_3).
\end{equation}
It follows that
\begin{equation}
\Big\lan \ppd{F}{z},\ppd{F}{z}\Big\ran=(\phi'_3)^2-(\phi'_3)^2=0
\end{equation}
and $M$ is totally holomorphic-like. As show in \cite{h-s-v}, $d=0$ implies $f$ is holomorphic or anti-holomorphic, and vice versa.

If $d\neq 0$, then
\begin{equation}\label{hol1}
 -d=\phi_3^2+\phi_4^2=(\phi_3+\sqrt{-1}\phi_4)(\phi_3-\sqrt{-1}\phi_4)
\end{equation}
implies $\phi_3-\sqrt{-1}\phi_4$ is an entire function having no zeros, hence there is an entire function $H(z)$, such that
\begin{equation}
 \phi_3-\sqrt{-1}\phi_4=e^{H(z)}.
\end{equation}
Substituting it into (\ref{hol1}) gives
\begin{equation}
 \phi_3+\sqrt{-1}\phi_4=-de^{-H(z)}.
\end{equation}
In conjunction with the above two equations we have
\begin{equation}
 \phi_3=\f{1}{2}(e^H-d e^{-H}),\qquad \phi_4=\f{\sqrt{-1}}{2}(e^H+d e^{-H}).
\end{equation}
Thus
\begin{equation}
 \aligned
\Big\lan \ppd{F}{z},\ppd{F}{z}\Big\ran&=(\phi'_1)^2+(\phi'_2)^2+(\phi'_3)^2+(\phi'_4)^2\\
&=\f{1}{4}(e^H+d e^{-H})^2(H')^2-\f{1}{4}(e^H-d e^{-H})^2(H')^2\\
&=d(H')^2
\endaligned
\end{equation}
which is identically zero if and only if $H$ is a constant function. In this case, $\phi_i$ and $\phi_{2+\a}$ are all constants
on $M$, hence $M$ has to be an affine plane.

\end{proof}

For the sequel, we put
\begin{equation}
(\n B)_{uuv}:=(\n_{\pf{v}}B)\Big(\pf{u},\pf{u}\Big),\qquad (\n B)_{www}:=(\n_{\pf{w}}B)\Big(\pf{w},\pf{w}\Big)
\end{equation}
and so on. Then (\ref{kato2}) says that there are $\xi_1,\xi_2\in \R$, such that
\begin{equation}\label{nb10}
\aligned
(\n B)_{uuu}&=\xi_1 B_{uu}-\xi_2 B_{uv}\\
(\n B)_{uuv}&=\xi_2 B_{uu}+\xi_1 B_{uv}.
\endaligned
\end{equation}

\begin{pro}\label{p10}
 For a fixed point $p$ in a  minimal surface $M\subset \R^{2+m}$, the following statements are equivalent:

(a) $|\n B|^2=2\big|\n |B|\big|^2$ at $p$;

(b) There is an isothermal coordinate chart $(U; u,v)$ around $p$, such that $(\n B)_{www}=\ze B_{ww}$ at $p$, with $w=u+\sqrt{-1}v$ and $\ze\in \Bbb{C}$;

(c) For an arbitrary isothermal coordinate chart $(U; u,v)$ around $p$, there is $\ze\in \Bbb{C}$, such that $(\n B)_{www}=\ze B_{ww}$ at $p$, with
$w=u+\sqrt{-1}v$.

\end{pro}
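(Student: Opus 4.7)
The plan is to prove (a)$\Leftrightarrow$(b) directly through Proposition \ref{kato1}, then deduce (b)$\Leftrightarrow$(c) from the transformation of $B_{ww}$ and $(\n B)_{www}$ under a change of isothermal coordinates.

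The key preliminary is the cubic analogue of (\ref{b11}):
\begin{equation*}
(\n B)_{www}=\tfrac{1}{2}\bigl[(\n B)_{uuu}-\sqrt{-1}(\n B)_{uuv}\bigr]\quad\text{at }p.
\end{equation*}
I would obtain this by expanding $(\n_{\pf{w}}B)(\pf{w},\pf{w})$ via trilinearity and then simplifying using the Codazzi equation (total symmetry of $\n B$) together with two consequences of minimality: $B_{uu}+B_{vv}=0$, and, upon applying $\n_X$ to $\tr_g B=0$, also $(\n_X B)_{uu}+(\n_X B)_{vv}=0$ for every $X$. Writing $\zeta=\xi_1-\sqrt{-1}\xi_2$ with $\xi_1,\xi_2\in\R$ and combining this identity with the known $B_{ww}=\frac{1}{2}(B_{uu}-\sqrt{-1}B_{uv})$, the single complex equation $(\n B)_{www}=\zeta B_{ww}$ decouples into exactly the pair of real tensor relations (\ref{nb10}), read in the orthonormal tangent basis $\{\lambda^{-1}\pf{u},\lambda^{-1}\pf{v}\}|_p$ with $\xi_i=\lambda(p)\lambda_i$.

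For (a)$\Rightarrow$(b), equality in the Kato inequality supplies, via Proposition \ref{kato1}, an orthonormal basis $\{e_1,e_2\}$ and constants $\lambda_1,\lambda_2\in\R$ satisfying $\lan B_{e_1 e_1},B_{e_1 e_2}\ran=0$ together with (\ref{kato2}); I would pick isothermal coordinates whose coordinate vectors at $p$ are $\lambda(p)e_1,\lambda(p)e_2$ (possible since a rigid rotation of an isothermal chart is still isothermal), and the decoupling above immediately yields $(\n B)_{www}=\lambda(p)(\lambda_1-\sqrt{-1}\lambda_2)B_{ww}$ in this chart. For the converse (b)$\Rightarrow$(a): given any chart in which $(\n B)_{www}=\zeta B_{ww}$, the decoupling gives (\ref{nb10}) in the associated orthonormal frame; if that frame fails to diagonalize the matrix $G(e_1,e_2)$ of \S3, I would rotate the isothermal chart so that it does, thereby arranging $\lan B_{e_1 e_1},B_{e_1 e_2}\ran=0$. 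The form (\ref{nb10}) is preserved under such a frame rotation because $B_{uu}-\sqrt{-1}B_{uv}$ and $(\n B)_{uuu}-\sqrt{-1}(\n B)_{uuv}$ transform with weights $2$ and $3$ respectively (a direct consequence of minimality and Codazzi), so $(\xi_1,\xi_2)$ transforms by the corresponding rotation and the rotated frame still satisfies (\ref{kato2}). All hypotheses of Proposition \ref{kato1} are then met, giving equality in Kato.

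Finally, (b)$\Leftrightarrow$(c) follows from the conformal relation between isothermal charts at $p$: the linearization is $\pf{w'}|_p=c\,e^{-\sqrt{-1}\alpha}\pf{w}|_p$ for some $c>0$ and $\alpha\in\R$, so trilinearity gives $B_{w'w'}=c^{2}e^{-2\sqrt{-1}\alpha}B_{ww}$ and $(\n B)_{w'w'w'}=c^{3}e^{-3\sqrt{-1}\alpha}(\n B)_{www}$ at $p$; hence $(\n B)_{www}=\zeta B_{ww}$ transports to $(\n B)_{w'w'w'}=c\,e^{-\sqrt{-1}\alpha}\zeta\,B_{w'w'}$ in the new chart. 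The principal technical obstacle I anticipate is the spin-weight check invoked in (b)$\Rightarrow$(a); although it is a routine trigonometric calculation, it is essential for arranging the orthogonality condition of Proposition \ref{kato1} without losing the form of (\ref{nb10}).
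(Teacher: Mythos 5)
Your proposal is correct and takes essentially the same route as the paper's proof: both hinge on the cubic identity $(\n B)_{www}=\f{1}{2}\big[(\n B)_{uuu}-\sqrt{-1}(\n B)_{uuv}\big]$ (obtained from the Codazzi symmetry of $\n B$ together with the differentiated minimality relation), on decoupling the single complex equation $(\n B)_{www}=\ze B_{ww}$ into the real pair (\ref{nb10}) with $\ze=\xi_1-\sqrt{-1}\xi_2$, and on invoking the equality case of Proposition \ref{kato1} in a frame where $\lan B_{uu},B_{uv}\ran=0$. Your explicit conformal-transition and spin-weight/rotation arguments merely spell out what the paper compresses into ``the equivalence of (b) and (c) is obvious'' and its upfront choice of an isothermal chart diagonalizing $G$, so they are a welcome elaboration rather than a different method.
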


\begin{proof}
The equivalence of (b) and (c) is obvious, so it is sufficient to prove the equivalence of (a) and (b).

Similarly to Section \ref{s1}, one can choose an isothermal coordinate neighborhood $(U;u,v)$ of $p$, such that
$$\lan B_{uu}, B_{uv}\ran=0\qquad \text{at }p.$$
Then by Proposition \ref{kato1}, (a) is equivalent to (\ref{nb10}).

By (\ref{hol}), one can obtain
\begin{equation}\label{nb11}
(\n B)_{www}=\f{1}{2}(\n B)_{uuu}-\f{\sqrt{-1}}{2}(\n B)_{uuv}
\end{equation}
with the aid of the Codazzi equations.
If (\ref{nb10}) holds, letting $\ze:=\xi_1-\sqrt{-1}\xi_2$ and combining with (\ref{b11}) and (\ref{nb11}) implies
\begin{equation}
\aligned
\ze B_{ww}
&=\f{1}{2}(\xi_1 B_{uu}-\xi_2 B_{uv})-\f{\sqrt{-1}}{2}(\xi_1 B_{uv}+\xi_2 B_{uu})\\
&=\f{1}{2}(\n B)_{uuu}-\f{\sqrt{-1}}{2}(\n B)_{uuv}=(\n B)_{www}.
\endaligned
\end{equation}
Conversely, if $(\n B)_{www}=\ze B_{ww}$, then by letting $\xi_1=\text{Re}\ze$ and $\xi_2=-\text{Im}\ze$, one can proceed similarly to above to
get (\ref{nb10}). Therefore (a) and (b) are equivalent.

\end{proof}

\begin{cor}
Let $M$ be a totally G-conformal minimal surface in $\R^4$, then
\begin{equation}\label{nb12}
|\n B|^2=2\big|\n |B|\big|^2
\end{equation}
holds at any $p\in M$ satisfying $|B|^2(p)>0$.
\end{cor}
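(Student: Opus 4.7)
The plan is to combine Propositions \ref{simons2} and \ref{p10} directly: total G-conformality is the identity $\lan B_{ww},B_{ww}\ran\equiv 0$, and differentiating it once in $w$ forces the proportionality of $(\nabla B)_{www}$ to $B_{ww}$ that characterizes equality in the refined Kato inequality.

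Fix $p\in M$ with $|B|^2(p)>0$ and choose a local isothermal chart $(U;u,v)$ around $p$, writing $w=u+\sqrt{-1}v$ and $g=\la^2(du^2+dv^2)$. Proposition \ref{simons2}(c) says that total G-conformality is equivalent to the pointwise vanishing $\lan B_{ww},B_{ww}\ran\equiv 0$ on $U$. Differentiating this $\C$-valued identity in $w$, and noting that tangential components of $\p_w B_{ww}$ are orthogonal to the normal vector $B_{ww}$, yields
\begin{equation*}
\lan \nabla_{\pf{w}}^{\perp}B_{ww},B_{ww}\ran=0.
\end{equation*}
A direct calculation using $\nabla_{\pf{w}}\pf{w}=(\p_w\log\la^2)\pf{w}$ in isothermal coordinates gives
\begin{equation*}
(\nabla B)_{www}=\nabla_{\pf{w}}^{\perp}B_{ww}-2(\p_w\log\la^2)B_{ww},
\end{equation*}
so the same orthogonality persists: $\lan (\nabla B)_{www},B_{ww}\ran=0$.

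The heart of the argument is a codimension-two linear-algebra observation. Since $M\subset\R^4$, the complexified normal space $(N_pM)\otimes\C$ has complex dimension $2$, and the $\C$-bilinear extension of the Euclidean inner product has an isotropic cone consisting of exactly two complex lines; hence for any nonzero null vector $\nu\in (N_pM)\otimes\C$ one has $\{\mu:\lan \mu,\nu\ran=0\}=\C\cdot \nu$. Minimality of $M$ gives $B_{vv}=-B_{uu}$, and hence $B_{ww}=\f{1}{2}(B_{uu}-\sqrt{-1}B_{uv})$; the assumption $|B|^2(p)>0$ then forces $B_{ww}(p)\neq 0$. Combining these two facts with the previous paragraph produces $\ze\in\C$ with $(\nabla B)_{www}(p)=\ze B_{ww}(p)$, and Proposition \ref{p10} immediately yields $|\nabla B|^2=2\big|\nabla|B|\big|^2$ at $p$. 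The main obstacle is packaging the codimension-two algebra cleanly together with the minor Christoffel correction relating $\nabla_{\pf{w}}^{\perp}B_{ww}$ to $(\nabla B)_{www}$; the two preceding propositions handle everything else.
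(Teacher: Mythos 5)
Your proposal is correct and takes essentially the same route as the paper: both proofs differentiate the total-G-conformality identity $\lan B_{ww},B_{ww}\ran\equiv 0$ (Proposition \ref{simons2}) in $w$, absorb the connection term coming from $\n_{\pf{w}}\pf{w}=(\partial_w\log\la^2)\pf{w}$, and use that $N_pM\otimes\C$ is $2$-dimensional together with $B_{ww}(p)\neq 0$ to conclude $(\n B)_{www}=\ze B_{ww}$, after which Proposition \ref{p10} finishes the argument. The only difference is presentational: the paper expands $(\n B)_{www}=\mu_3 B_{ww}+\mu_4 B_{\bar{w}\bar{w}}$ in the basis $\{B_{ww},B_{\bar{w}\bar{w}}\}$ and shows $\mu_4|B_{ww}|^2=0$, whereas you invoke the equivalent nondegeneracy fact that the orthogonal complement of a nonzero isotropic vector in $\C^2$ is its own complex line.
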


\begin{proof}

Since $M$ is totally holomorphic-like, $B_{uu}$ and $B_{uv}$ have the
same length and are orthogonal to each other. Since  $\dim N_pM=2$ and
$|B|^2(p)>0$ we conclude that $N_p M=\text{span}\{B_{uu},B_{uv}\}$ and moreover
$$N_p M\otimes \C=\text{span}\{B_{ww},B_{\bar{w}\bar{w}}\}.$$
Thus there are $\mu_3,\mu_4\in \Bbb{C}$, such that
$$(\n B)_{www}=\mu_3 B_{ww}+\mu_4 B_{\bar{w}\bar{w}}.$$

Differentiating both sides of $\lan B_{ww},B_{ww}\ran=0$ yields
\begin{equation}\aligned
 0&=\f{1}{2}\pf{w}\lan B_{ww},B_{ww}\ran=\lan \n_{\pf{w}}(B_{ww}),B_{ww}\ran\\\
&=\lan (\n B)_{www}+2B_{\n_{\pf{w}}\pf{w},\pf{w}},B_{ww}\ran\\
&=\lan (\n B)_{www},B_{ww}\ran+2\mu_1\lan B_{ww},B_{ww}\ran\\
&=(\mu_3+2\mu_1)\lan B_{ww} ,B_{ww}\ran+\mu_4\lan B_{\bar{w}\bar{w}},B_{ww}\ran\\
&=\mu_4|B_{ww}|^2
\endaligned
\end{equation}
where we have used (\ref{w2}).
Hence $\mu_4=0$ and then (\ref{nb12}) follows from Proposition \ref{p10}.

\end{proof}

\bibliographystyle{amsplain}

\end{document}